\tikzset{%
  input/.style    = {draw, line width=0.45mm, rectangle, minimum height = 3em,
    minimum width = 3em},
  block/.style    = {draw, thick, rectangle, minimum height = 3em,
    minimum width = 3em},
  oper/.style      = {draw, circle, node distance = 2cm}, 
}
\newcommand{\removelatexerror}{\let\@latex@error\@gobble}
\newcommand{\re}{\mathbb{R}}
\newcommand{\eps}{\varepsilon}
\renewcommand{\eps}{\varepsilon}
\renewcommand{\phi}{\varphi}
\renewcommand{\div}{\mathrm{\div}}
\newcommand{\Hc}{\mathcal{H}}
\newcommand{\Nc}{\mathcal{N}}
\newcommand{\Oc}{\mathcal{O}}
\newcommand{\Eb}{\mathbb{E}}
\newcommand{\Pb}{\mathbb{P}}
\newcommand{\Var}{\mathrm{Var}}
\newcommand{\defeq}{\vcentcolon=}
\providecommand{\abs}[1]{\left\lvert#1\right\rvert}
\providecommand{\norm}[1]{\left\lVert#1\right\rVert}
\providecommand{\set}[1]{\left\{#1\right\}}
\def\R{\mathbb{R}}
\def\N{\mathbb{N}}
\newcommand{\mean}[1]{\mathbb{E}\left[{#1}\right]}
\newcommand{\var}[1]{\mathbb{V}ar\left[{#1}\right]}
\DeclareMathOperator{\ind}{\mathds{1}}
\DeclareMathOperator{\nnz}{nnz}
\DeclareMathOperator{\st}{\mbox{s.t.}}
\DeclareMathOperator{\supp}{supp}
\def\expander{\mathbb{E}_{k, \varepsilon, d}^{m\times n}}
\def\Sparse{\chi_k^n}
\DeclareMathOperator{\expandermodel}{GM}
\newcommand\simiid{\stackrel{\text{i.i.d.}}{\sim}}
\theoremstyle{definition}
\newtheorem{theorem}{Theorem}[section]
\newtheorem{lemma}[theorem]{Lemma}
\newtheorem{definition}[theorem]{Definition}
\newtheorem*{remark}{Remark}
\begin{document}

%
\title{A robust parallel algorithm for combinatorial compressed sensing}
%
%

\author{
    \IEEEauthorblockN{Rodrigo~Mendoza-Smith\IEEEauthorrefmark{1}\IEEEauthorrefmark{2}, Jared~Tanner\IEEEauthorrefmark{1}\IEEEauthorrefmark{2}, and~Florian~Wechsung\IEEEauthorrefmark{1}}
    \IEEEauthorblockA{
    \\~\\\IEEEauthorrefmark{1} Mathematical Institute, University of Oxford, Oxford, OX2 6GG.
    }
    \IEEEauthorblockA{
    \\\IEEEauthorrefmark{2} Alan Turing Institute, British Library, London, NW1 2DB.
    }
\thanks{RMS acknowledges the support of CONACyT}
\thanks{This work was supported by The Alan Turing Institute under the EPSRC grant EP/N510129/1}
\thanks{Furthermore, it is based on work partially supported by the EPSRC Centre For Doctoral Training in Industrially Focused Mathematical Modelling (EP/L015803/1) in collaboration with PA Consulting Group}
}


%
%


%

\maketitle


\begin{abstract}
%
%
It was shown in \cite{Tanner:2015aa} that a vector $x \in \R^n$ with at most $k < n$ nonzeros can be recovered from an expander sketch $Ax$ in $\mathcal{O}(\nnz(A)\log k)$ operations via the Parallel-$\ell_0$ decoding algorithm, where $\nnz(A)$ denotes the number of nonzero entries in $A \in \R^{m \times n}$.
In this paper we present the Robust-$\ell_0$ decoding algorithm, which robustifies Parallel-$\ell_0$ when the sketch $Ax$ is corrupted by additive noise.
This robustness is achieved by approximating the asymptotic posterior distribution of values in the sketch given its corrupted measurements.
We provide analytic expressions that approximate these posteriors under the assumptions that the nonzero entries in the signal and the noise are drawn from continuous distributions.
Numerical experiments presented show that Robust-$\ell_0$ is superior to existing greedy and combinatorial compressed sensing algorithms in the presence of small to moderate signal-to-noise ratios in the setting of Gaussian signals and Gaussian additive noise.
\end{abstract}

\section{Introduction}
\label{sec:introduction}
%
%
%
%




\IEEEPARstart{C}{ompressed} sensing is a well studied method by which a sparse or compressible vector can be acquired by a number of measurements proportional to the number of its dominant entries \cite{Candes:2005aa}, \cite{Donoho:2006aa}.
To fix notation, let $\Sparse$ be the set of vectors in $\R^n$ that have at most $k$ nonzero entries, let $x \in \Sparse$ and let $A\in\R^{m\times n}$ be a matrix with $m < n$.
We will refer to $A$ as the measurement matrix, $x$ as the signal and $y=Ax$ as the measurements.
The goal of compressed sensing is to recover the sparsest, most parsimonious, $x \in \R^n$ from the measurements $y$ and the matrix $A$.
Letting $\|\cdot\|_0$ denote the number of non-zeros in $x$, the problem of finding $x$ can be written as
\begin{equation*}
	\min_{x\in\R^n} \norm{x}_0 \text{ s.t. } Ax = y.
\end{equation*}
%
%
Many algorithms have been developed to solve this problem or equivalent formulations and there are good theoretical results on when and how fast recovery of a signal is possible given certain types of measurement matrix $A$ and signal $x$.
These algorithms can be broadly categorized into convex optimization
based algorithms like those implemented in \cite{donoho2007sparselab,
  kim2007interior, figueiredo2007gradient, candes2005l1} and greedy
algorithms \cite{Blumensath:2008aa, Pati:1993aa, needell2009cosamp,
  blumensath2010normalized, Dai:2013aaSubspacePursuit, Blanchard:2015aaCGIHT, Cevher:2011aaALPS}, and were
designed and analysed for the setting of dense sensing matrices;
e.g. independent \mbox{(sub-)Gaussian} entries or randomly subsampled Fourier matrices.
For a more detailed introduction to compressed sensing see \cite{Foucart:2013aa}.

Here we extend an algorithm proposed in \cite{Tanner:2015aa} which can be used to recover exactly a sparse signal from its expander sketch (see Section \ref{sec:combinatorical-compressed-sensing} for details).
Specifically, \cite{Tanner:2015aa} proposed Parallel-$\ell_0$ (Algorithm \ref{alg:parallel-l0}), for noiseless combinatorial compressed sensing which is guaranteed to converge in $\mathcal{O}(\nnz(A)\log(k))$ where the sensing matrix $A$ is an expander matrix (Definition \ref{def:expander_matrix}) and the signal $x \in \Sparse$ is {\em dissociated} in the sense of Definition \ref{def:dissociated} or the signal is drawn independently of $A$.
For alternative combinatorial compressed sensing algorithms see, for example,
\cite{Sarvotham:2006aaSudocodes,Xu:2007aa, Berinde:2008aa, Berinde:2009aa, Jafarpour:2009aa, Ma:2014aa}.
We borrow notation from combinatorics and use the shorthands $[n]:=\{1, \dots, n\}$, $[n]^{(k)}:=\{S \subset [n]: |S| = k\}$ where $|S|$ denotes the cardinality of the set $S$, and $[n]^{(\leq k)} := \cup_{\ell \leq k} [n]^{(\ell)}$ for $n,k \in \N$ and $k < n$.
%
%

\begin{definition}[Expander matrices \cite{Tanner:2015aa}]
\label{def:expander_matrix}
The matrix $A \in \{0,1\}^{m \times n}$ is a $(k, \eps, d)$-expander matrix if $\sum_{i = 1}^m \ind_{|A_{i,j}|>0} = d$ for all $j \in [n]$ and
\begin{equation*}
    \Big|\Big\{ i\in [m] : \sum_{j \in S} \ind_{|A_{i,j}| > 0}\Big\}\Big| > (1 - \eps)d|S|
\end{equation*}
for all $S \in [n]^{(\leq k)}$.
We denote by $\expander$ the set of $(k, \eps,d)$-expander matrices of dimension $m \times n$.
\end{definition}
\begin{definition}[Dissociated signals \cite{Tanner:2015aa}]
\label{def:dissociated}
A signal $x \in \R^n$ is said to be dissociated if
\begin{equation*}
	\sum_{j\in S_1} x_j \neq \sum_{j\in S_2} x_j \qquad \forall S_1,\ S_2 \subset \supp(x) \text{ s.t. } S_1 \neq S_2.
\end{equation*}
\end{definition}
An example of (almost surely) dissociated signals are those drawn from a continuous distribution.
It is shown in \cite{Tanner:2015aa} that if $y = Ax $ is an expander sketch and $x \in \Sparse$ is dissociated, then there exists a subset $T \subset [n]$ such that, for each $j \in T$, $|\{i \in [m] : y_i = x_j\}|$ is bounded below by a positive constant depending on $d$ and $\eps$.
This guarantees that if $|\{i \in \supp(a_j) : y_i = y_{\ell}\}| > d/2$  then $x_j = y_{\ell}$.
%
%
Parallel-$\ell_0$ (Algorithm \ref{alg:parallel-l0}) implements this observation by letting $\hat x = 0$ and estimating the decrease in $\|y\|_0$ when performing the update $\hat x_j \leftarrow \hat x_j + y_{\ell}$.
We denote for $j\in [n]$ its neighbours by $\Nc(j) \defeq \{i\in[m] : |A_{ij}|>0\}$;
to estimate the decrease in $\|y\|_0$, Parallel-$\ell_0$ computes 
\begin{align}
\label{eq:ne_nz_no_noise_1}
n_e &\gets \left|\{ \ell \in \mathcal{N}(j) : y_i = y_{\ell} \}\right|,\\
\label{eq:ne_nz_no_noise_2}
n_z &\gets \left|\{ \ell \in \mathcal{N}(j) : y_{\ell} = 0\}\right|.
\end{align}
We extend their approach to the additive noise signal model of $\hat y = y + \eta$ with $y = Ax$ and $\eta \in \R^m$ by replacing \eqref{eq:ne_nz_no_noise_1}-\eqref{eq:ne_nz_no_noise_2} with scores estimating the distribution of $n_e$ and $n_z$, e.g. \eqref{eq:ne_nz_1}-\eqref{eq:ne_nz_2}.
That is, we follow a Bayesian approach to the computation of these scores and estimate:
\begin{enumerate}
\item The probability of $y_i = 0$ given that we observe $\hat y_i$.
\begin{equation}
\label{eq:pz}
p_z\left(\omega\right) := \Pb\left( y_i = 0 \mid \hat y_i = \omega\right).
\end{equation}
\item The probability of $y_{i_1} = y_{i_2}$ given that we observe $\hat y_{i_1} - \hat y_{i_2}$.
\begin{equation}
\label{eq:pe}
p_e\left(\omega\right) := \Pb\left(y_{i_1} = y_{i_2} \mid \hat y_{i_1} - \hat y_{i_2} = \omega\right)
\end{equation}
\end{enumerate}

Among our contributions are series approximations for \eqref{eq:pz}-\eqref{eq:pe} when the signals and measurements are generated according to the generating model given in Definition \ref{def:expander_model} and illustrated in Figure \ref{fig:expander_model}.
In what follows, we let $\mathbb{D}(\R)$ be the set of distributions supported on $\R$.
If $\mu \in \mathbb{D}(\R)$, we write $z \sim \mu$ to denote that $z$ was drawn according to the distribution $\mu$. We also use the notation $v_i \simiid \mu$ to denote that each $v_i$ is drawn independently at random from $\mu$. Finally, we use $U(S)$ to denote the uniform distribution over a set $S$.

\begin{definition}[Generating model $\expandermodel(n, m, k, d, \mu, \nu)$]
\label{def:expander_model}
Let $n, m, k, d \in \mathbb{N}$ be such that $k < m < n$ and $d \ll m$.
Let $\mu, \nu \in \mathbb{D}(\R)$.
Then, the problem $(A, \hat y)$ is drawn from the model $\expandermodel(n, m, k, d, \mu, \nu)$ if $A \in \{0,1\}^{m \times n}$ and $\hat y\in \R^m$ are such that
\begin{enumerate}
\item each column of $A$ has a support drawn uniformly at random from $[m]^{(d)}$;
\item $\supp(x)$ is drawn uniformly at random from $[n]^{(k)}$;
\item $x_j \simiid \mu$ for each $j \in \supp(x)$;
\item $\eta_i \simiid \nu$ for each $i \in [m]$;
\item $\eta_i$ is independent of $x_j$ for all $i\in [m],j\in \supp(x)$;
\item $y = Ax$ and $\hat y = y + \eta$.
\end{enumerate}
We write $(A, \hat y)\sim \expandermodel(n, m, k, d, \mu, \nu)$ to
denote problem instances drawn from this signal model.
\end{definition}
\begin{remark}
It is important to note that a matrix $A \in \R^{m \times n}$ generated under the model presented in Definition \ref{def:expander_model} and Figure \ref{fig:expander_model}, is a $(k, \eps, d)$-expander matrix with high probability, see \cite{Bah:2013aa}, \cite[Theorem 13.6]{Foucart:2013aa}.
\end{remark}

\begin{figure}

\begin{tikzpicture}[thick, scale=0.78, node distance=2cm, >=triangle 45, every node/.style={scale=0.78}]
\draw
node at (0, -10)[input, right=-3mm, name=variables, align=center]{$n, m, k, d \in \mathbb{N}$ \\ $\mu, \nu \in \mathbb{D}(\R)$}
    node [block, above of=variables] (xsupp) {$\supp(x) \sim U\left([n]^{(k)}\right)$}
    node [block, above of=xsupp] (xmu) {$x_j \simiid \mu$ $\forall j \in \supp(x)$}
    node [block, above of=xmu] (x) {$x \in \Sparse$};
    \draw[->](variables) -- node {} (xsupp);
    \draw[->](xsupp) -- node {} (xmu);
    \draw[->](xmu) -- node {} (x);
\draw [color=gray,thick](-1,-9) rectangle (2.75,-3);
\draw
    node [block, right of=xsupp, right=+3mm] (colsupp) {$\supp(A_j) \sim U\left([m]^{(d)}\right)$}
    node [block, above of=colsupp] (colones) {$A_{i,j} = 1$ $\forall i \in \supp(A_j)$}
    node [input, above of=colones] (matrix) {$A \in \{0, 1\}^{m \times n}$};
    \draw[->](colsupp) -- node {} (colones);
    \draw[->](colones) -- node {} (matrix);
    \draw [color=gray,thick](3,-9) rectangle (7.05,-3);
\draw
    node [block, right of=colones, right=+3mm] (noisevals) {$\eta_i \simiid \nu$ $\forall i \in [m]$}
    node [block, above of=noisevals] (noise) {$\eta \in \mathbb{R}^m$};
    \draw[->](noisevals) -- node {} (noise);
    \draw [color=gray,thick] (7.2,-7) rectangle (10.15,-3);
\draw
    node [oper, above of=x, right=+15mm] (prod) {$\cdot$}
    node [block, above of=prod] (y) {$y=Ax$};
    \draw[->](x) |- node {} (prod);
    \draw[->](matrix) |- node {} (prod);
    \draw[->](prod) -- node {} (y);
\draw
    node [oper, right of=y, right=+15mm] (sum) {$+$}
    node  [input, above of=sum] (yhat) {$\hat y = y + \eta$};
    \draw[->](noise) |- node {} (sum);
    \draw[->](y) -- node {} (sum);
    \draw[->](sum) -- node {} (yhat);
    \draw[->](variables) -| node {} (colsupp);
    \draw[->](variables) -| node {} (noisevals);
\end{tikzpicture}

\caption[Generating model for Robust-$\ell_0$]{Generating model $\expandermodel(n, m, k, d, \mu, \nu)$.}
\label{fig:expander_model}
\end{figure}

Moreover, the generating model in Definition \ref{def:expander_model} also allows us to define robust estimates for \eqref{eq:pz}-\eqref{eq:pe} for general noise and signal distributions and to any degree of accuracy under the assumption that these probability measures are available. 
From there we can define noisy analogues to the values $n_e$ and $n_z$ in \eqref{eq:ne_nz_no_noise_1}-\eqref{eq:ne_nz_no_noise_2} used in Parallel-$\ell_0$ but which are robust to additive noise.
The contributions of this work are two-fold: (i) to present principled ways to compute \eqref{eq:pz}-\eqref{eq:pe} in the case where the nonzeros in $\eta$ and $x$ are drawn from continuous probability distributions; (ii) to provide a variation of Parallel-$\ell_0$ that is robust to noise.
While other similar generating models can be considered using the techniques presented here, for ease of exposition and clarity, we restrict our description to this model.

\begin{theorem}[Probabilities for general signal and noise distributions]
\label{th:probabilities_general}
Let $\delta, \rho \in (0,1)$.
For each $n > 1$, let $m = \delta n$, $k = \rho m$ and $d < m$.
If $\mu, \nu \in \mathbb{D}(\R)$ and $(A, \hat y) \sim \expandermodel(n, m, k, d, \mu, \nu)$. Then as $n \rightarrow \infty$,
%
%
%
\begin{align}
  \label{eq:pz_general}
p_z\left(\omega\right) & \rightarrow \frac{\nu(\omega)}{\sum_{q\ge0} \frac{(d\rho)^q}{q!} (\nu \ast \mu_q)(\omega)},\\
  \label{eq:pe_general}
p_e\left( \omega\right) &\rightarrow \frac{\tilde\nu(\omega)}{\sum_{q\ge 0} \frac{(2d\rho)^q}{q!} (\tilde \nu \ast \bar\mu_q)(\omega)}.
\end{align}
Where $\mu_q, \bar \mu_q, \tilde \nu$ are probability measures constructed as in Definition \ref{def:measures}.
\end{theorem}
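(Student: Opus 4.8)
The plan is to compute both posteriors exactly for finite $n$ by Bayes' rule, after reducing each conditioning event to a purely combinatorial count of how many nonzero signal entries ``land'' on the row(s) in question, and then to pass to the limit $n \to \infty$. Fix a row $i$ and set $N_i := |\{j \in \supp(x) : i \in \supp(A_j)\}| = \sum_{j \in \supp(x)} \ind_{i \in \supp(A_j)}$. Conditioned on $N_i = q$ we have $y_i = \sum_{\ell=1}^q x_{j_\ell}$ with $x_{j_\ell} \simiid \mu$, so $y_i$ has the $q$-fold convolution law $\mu^{\ast q}$; since $\mu$ is continuous this law has no atom at $0$ for $q \ge 1$, whence $\{y_i = 0\} = \{N_i = 0\}$ up to a null event. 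On $\{y_i = 0\}$ we have $\hat y_i = \eta_i \sim \nu$, and conditioned on $N_i = q$ the observation $\hat y_i = y_i + \eta_i$ has density $(\nu \ast \mu^{\ast q})(\omega)$, so the mixed discrete–continuous Bayes formula gives, for each $n$,
\begin{equation*}
p_z(\omega) = \frac{\P{N_i = 0}\, \nu(\omega)}{\sum_{q \ge 0} \P{N_i = q}\,(\nu \ast \mu^{\ast q})(\omega)}.
\end{equation*}

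Next I would identify the law of $N_i$. Because the columns $A_j$ are drawn independently with $\supp(A_j) \sim U([m]^{(d)})$, the indicators $\ind_{i \in \supp(A_j)}$ are i.i.d.\ $\bernoulli(d/m)$, so $N_i \sim \binomial(k, d/m)$ exactly, independently of $\supp(x)$. With $m = \delta n$, $k = \rho m$ and $d$ fixed, $d/m \to 0$ while $k \cdot (d/m) = d\rho$ is constant, so the Poisson limit theorem yields $\P{N_i = q} \to e^{-d\rho}(d\rho)^q/q!$ for every $q$. Substituting, cancelling the common factor $e^{-d\rho}$, and identifying $\mu^{\ast q}$ with the measure $\mu_q$ of Definition \ref{def:measures} gives \eqref{eq:pz_general}.

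For $p_e$ I would repeat the scheme on $y_{i_1} - y_{i_2} = \sum_{j \in \supp(x)} (A_{i_1 j} - A_{i_2 j})\,x_j$. A column contributes precisely when exactly one of $i_1, i_2$ lies in $\supp(A_j)$, with sign $+1$ or $-1$ accordingly; let $M$ count these discordant columns. Column independence makes $M \sim \binomial(k, p_n)$ with $p_n = 2\tfrac{d}{m}\tfrac{m-d}{m-1}$, and since $p_n \to 0$ with $k p_n \to 2d\rho$ we obtain $M \to \poisson(2d\rho)$ — the origin of the factor $2d\rho$ in \eqref{eq:pe_general}. By the symmetry between $i_1$ and $i_2$, conditioned on a column being discordant its sign is $\pm 1$ with equal probability, so conditioned on $M = q$ the difference $y_{i_1} - y_{i_2}$ has the law of $\sum_{\ell=1}^q \eps_\ell x_\ell$ with $\eps_\ell$ i.i.d.\ uniform signs and $x_\ell \simiid \mu$; this is exactly $\bar\mu_q$. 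As $\mu$ is continuous, $\{y_{i_1} = y_{i_2}\} = \{M = 0\}$ a.s., and on this event $\hat y_{i_1} - \hat y_{i_2} = \eta_{i_1} - \eta_{i_2} \sim \tilde\nu$; the identical Bayes computation then yields \eqref{eq:pe_general}.

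The delicate point is the passage to the limit inside the infinite series in the denominator, since the Poisson convergence is only termwise. I would control this by dominated convergence: assuming $\nu$ has a bounded density, convolution with a probability measure is non-expansive in $L^\infty$, so $(\nu \ast \mu^{\ast q})(\omega) \le \norm{\nu}_\infty$ uniformly in $q$, while the binomial weights are dominated termwise by the summable sequence $(d\rho)^q/q!$; this legitimises exchanging $\lim_{n\to\infty}$ with $\sum_{q \ge 0}$, and the analogous bound $(\tilde\nu \ast \bar\mu_q)(\omega) \le \norm{\nu}_\infty$ handles the $p_e$ series. The remaining care is measure-theoretic: ensuring the conditional densities and the Bayes ratio are well defined (absolute continuity of $\mu$ and $\nu$) and that the substitutions $\{y_i = 0\} \mapsto \{N_i = 0\}$ and $\{y_{i_1} = y_{i_2}\} \mapsto \{M = 0\}$ remain valid under the conditioning on the observed value. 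I expect this termwise-to-uniform upgrade of the series to be the main technical obstacle; the combinatorial content — the exact binomial laws, their Poisson limits with rates $d\rho$ and $2d\rho$, and the i.i.d.\ sign structure producing the symmetrised convolutions $\bar\mu_q$ — is the conceptual crux.
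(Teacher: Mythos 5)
Your argument is correct and arrives at the same mixture formulas, but by a genuinely different route from the paper. The paper first proves a limit theorem for sparse sums (Lemma \ref{lemma:sparse_sums}) via characteristic functions and L\'evy's continuity theorem, obtains the limiting laws of $\hat y_i$ and $\hat y_{i_1}-\hat y_{i_2}$ in Theorem \ref{thm:main} as the Poisson mixtures \eqref{eq:dist_yhat_star}--\eqref{eq:dist_ghat_star}, and only then applies Bayes' rule to the limiting densities, reading off the numerators as the $q=0$ terms of those mixtures. You instead condition on the exact number $N_i$ (resp.\ $M$) of contributing (resp.\ discordant) columns, which is exactly $\binomial(k,d/m)$ (resp.\ $\binomial(k,p_n)$) because the column supports are i.i.d.\ and independent of $\supp(x)$, write the exact finite-$n$ Bayes ratio as a binomial mixture, and pass to the Poisson limit termwise. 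Your route buys two things: it sidesteps the paper's informal replacement of the weakly dependent indicators $\ind_{\{j\in\supp(x)\}}$ by independent Bernoullis, since conditioning on the $k$-set $\supp(x)$ makes the count exactly binomial; and it makes explicit the interchange of limit and infinite sum in the denominator, which weak convergence alone (the paper's route) does not justify at the level of pointwise density values --- your domination of the binomial weights by $(d\rho)^q/q!$ together with $(\nu\ast\mu_q)(\omega)\le\norm{\nu}_\infty$ is the right fix, at the mild cost of assuming $\nu$ has a bounded density and $\mu$ is non-atomic (both implicit in the paper's restriction to continuous distributions, and needed in any case to identify $\{y_i=0\}$ with $\{N_i=0\}$). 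What the paper's approach buys is a standalone limit law for the residual entries themselves (Theorem \ref{thm:main}), which it states as a result of independent interest. One bookkeeping note in your favour: your numerator for $p_e$ correctly carries the prior $\Pb(M=0)\to e^{-2d\rho}$, which then cancels against the $e^{-2d\rho}$ in the denominator, whereas the paper's \eqref{eq:pbe_num} silently drops this factor.
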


Equations \eqref{eq:pz_general} and \eqref{eq:pe_general} allows us to quantify the uncertainty associated with computing the score for Parallel-$\ell_0$ under the presence of additive noise.
Note that equations \eqref{eq:pz_general} and \eqref{eq:pe_general}
can be easily adapted to alternative generative models, such as where
the expected density of nonzeros per row varies, but for expository
clarity we restrict our discussion to this somewhat generic model.
It will be discussed in Section \ref{ssub:scaling-of-probabilities} that equations \eqref{eq:pz_general} and \eqref{eq:pe_general} should not be used directly, but instead be scaled by considering {\em normalised} functions $\breve p_e$ and $\breve p_z$ defined as,
\begin{equation}
\label{eq:breve}
\breve p_e(\omega) =  \frac{p_e(\omega)}{\max_s p_e(s)}, \;\;\;\breve p_z(\omega) =  \frac{ p_z(\omega)}{\max_s  p_z(s)}.
\end{equation}
In the most general case $n_e$ and $n_z$ can be written as the sum of individual scores $q_e$ and $q_z$ as follows
\begin{align}
\label{eq:ne_nz_1}
n_e &\gets  \sum_{\ell \in \mathcal{N}(j)} q_e\left(r_{i_1} - r_{i_2} \mid t\right)\\
\label{eq:ne_nz_2}
n_z &\gets \sum_{\ell \in \mathcal{N}(j)} q_z\left(r_i \mid t\right)
\end{align}
A confidence threshold $t >0$ needs to be given for some variants of our algorithm, so to simplify the exposition we include this parameter in the scores $q_e(\cdot \mid t)$ and $q_z(\cdot \mid t)$ regardless of whether it is used or not.
A summary of the score functions are given in Table \ref{table:scores}.

\begin{table}[!htbp]
\begin{equation*}
\begin{array}{|l||c | c| c|}
\hline
&&&\\
 & \mbox{Robust-$\ell_0$} & \mbox{Robust-$\ell_0$} & \\
 & \mbox{Continuous} & \mbox{Quantised} & \mbox{Parallel-$\ell_0$}\\
&&&\\
\hline
\hline
&&&\\
q_e(r_{i_1} - r_{i_2} \mid t) & \breve p_e(r_{i_1} - r_{i_2})& \ind_{\left\{ \breve p_e(r_{i_1} - r_{i_2}) \ge t \right\}} &\ind_{\left\{ r_{i_1} = r_{i_2}\right\}}\\
&&&\\
\hline
&&&\\
q_z(r_i \mid t) & \breve p_z(r_i)& \ind_{\left\{ \breve p_z(r_i) \ge 1 - t\right\}}&\ind_{\left\{ r_{i} =0\right\}}\\
&&&\\
\hline
\end{array}
\end{equation*}
\caption[Scores]{Extensions of scores used in Expander $\ell_0$-decoding to identify candidate updates to the sparse signal $\hat x$.}
\label{table:scores}
\end{table}

%
\begin{small}
  \removelatexerror
	\begin{algorithm}
   \KwData{$A \in \expander$; $y=Ax \in \R^m$ for $x\in \Sparse$; $\alpha \in (1, d]$; $\mu, \nu \in \mathbb{D}(\R)$; $c \in (0,1)$}
   \KwResult{ $\hat x \in \R^n$ $\st$ $ \hat x \approx x$}
    Estimate $\breve p_z = \breve p_z(d, k, m, n, \mu, \nu)$ as in \eqref{eq:breve}\;
    Estimate $\breve p_e = \breve p_e(d, k, m, n, \mu, \nu)$ as in \eqref{eq:breve}\;
    \eIf{{\footnotesize \tt quantised}}{
      Use {\em quantised} scores given in Table \ref{table:scores}.
    }{
      Use {\em continuous} scores given in Table \ref{table:scores}.
    }
    $\hat x \leftarrow 0$\;
    $\hat r \leftarrow y$\;
    $t \gets 1$\;
    \While{not converged and $t > 0$}{
		  $x' \gets \hat x$\;
      $r \gets \hat r$\;
		  \For{$j \in [n]$}{
        \For{$i \in \mathcal{N}(j)$}{
          \If{$1 - \breve p_z(r_{i}) \geq t$}{
            $n_e \gets \sum_{\ell \in \mathcal{N}(j)} q_e(r_i - r_{\ell} \mid t)$\;
            $n_z \gets \sum_{\ell \in \mathcal{N}(j)} q_z(r_\ell \mid t)$\;
            $\omega \gets \frac{1}{n_e}\sum_{\ell \in \mathcal{N}(j)} r_{\ell}q_e(r_i - r_{\ell} \mid t)$\;
		        \If{$\norm{r-\omega e_i}_1 \le \norm{r}_1$ and $n_e - n_z \geq \alpha$}{
		          $x'_j \gets x'_j + \omega$\;
            }
          }
        }
      }

			$x' \gets \Hc_k(x')$\;
			$r \gets y- A x'$\;
      $t \gets t - c$\;
		  \If{$\|r\|_1<\|\hat r\|_1$}{
		    $\hat x \gets x'$\;
		    $\hat r \gets r$\;
        \If{{\footnotesize \tt adaptive\_k}}{
          $k_0 \gets k - \sum_{j \in [n]} \breve p_z(\hat x_j)$\;
          $k_0 \gets \max(k_0, \lfloor \frac{m}{100}\rfloor)$\;
          Recompute $\breve p_z = \breve p_z(k_0, m, n, \mu, \nu)$\;
          Recompute $\breve p_e = \breve p_e(k_0, m, n, \mu, \nu)$\;
        }
      }
    }
  \caption{Robust-$\ell_0$}
  \label{alg:robust-l0}
	\end{algorithm}
\end{small}

\subsection{Outline of the manuscript}

The structure of this paper is as follows: Section \ref{sec:combinatorical-compressed-sensing} comprises a review of combinatorial
compressed sensing and the Parallel-$\ell_0$ decoding algorithm
extended here.
Robust-$\ell_0$ decoding and the associated scores \eqref{eq:ne_nz_1}-\eqref{eq:ne_nz_2} are presented in Section \ref{sec:l0-decoding-noisy}.
In Section \ref{sec:numerical-evidence} we present numerical experiments which demonstrate Robust-$\ell_0$ to perform superior to a number of leading greedy and combinatorial compressed sensing algorithms.

\section{Background: Combinatorial Compressed Sensing and $\ell_0$-decoding}
\label{sec:combinatorical-compressed-sensing}

As mentioned in the previous section, the branch of combinatorial compressed sensing measures $x \in \Sparse$ with the adjacency matrix of an expander graph.
These matrices are of very low complexity in terms of generation and storage, and also promise faster encoding and decoding than their dense counterparts, see Theorem \ref{th:l0-decoding-convergence}.
In this section we review the basic elements of expander graphs and combinatorial compressed sensing.

There have been various algorithms proposed to reconstruct a sparse vector $x$ from measurements $y=Ax$ when $A$ is an expander matrix, see \cite{Xu:2007aa}, \cite{Berinde:2008aa}, \cite{Jafarpour:2009aa} and \cite{Berinde:2009aa}; the work presented here starts with the Parallel-$\ell_0$ algorithm and to improve this algorithm by making it robust to noisy measurements, results in Robust-$\ell_0$ (Algorithm \ref{alg:robust-l0}).
The key observation for the Parallel-$\ell_0$ algorithm is given by the following Lemma.

\begin{small}
\removelatexerror
\begin{algorithm}
 \KwData{$A \in \expander$; $y=Ax \in \R^m$ for $x \in \Sparse$; $\alpha \in (1, d]$}
 \KwResult{ $\hat x \in \R^n$ $\st$ $\hat x = x$ }
 $\hat x \leftarrow 0$, $r \leftarrow y$\;
  \While{not converged}{
    \For{$j \in [n]$}{
      $u \leftarrow 0$\;
      \For{$i \in \mathcal{N}(j)$}{
        \If{ $r_{i} \neq 0$}{
          $n_e \gets \left|\{ \ell \in \mathcal{N}(j) : r_i = r_{\ell} \}\right|$\;
          $n_z \gets \left|\{ \ell \in \mathcal{N}(j) : r_{\ell} = 0\}\right|$\;
          \If{$n_e - n_z \geq \alpha$}{
            $u_j \gets r_i$\;
          }
        }
      }
    }
    $\hat x \leftarrow \hat x + u$\;
    $r \gets r - A\hat x$\;
}
\caption{Parallel-$\ell_0$ \cite{Tanner:2015aa}}
\label{alg:parallel-l0}
\end{algorithm}
\end{small}

\begin{lemma}
	Let $y=Ax$, $x$ dissociated, $A\in \expander$ with $\varepsilon<\frac{1}{4}$. Then there exists a nonempty set $T\subset [n]\times\re$ such that 
	\begin{equation*}
		\abs{\{i\in\Nc(j):y_i=\omega\}}\ge (1-2\varepsilon)d \;\; \forall (j, \omega) \in T
	\end{equation*}
	and for every tuple in $T$ that satisfies this property, we have $w=x_j$.
\end{lemma}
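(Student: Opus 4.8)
The plan is to fix the support $S = \supp(x)$ and exploit the identity $y_i = \sum_{j' \in S_i} x_{j'}$, where $S_i := \{j' \in S : i \in \Nc(j')\}$ denotes the support coordinates adjacent to measurement $i$. I would take $T$ to be exactly the set of pairs $(j,\omega) \in [n]\times\re$ for which $\abs{\{i\in\Nc(j):y_i=\omega\}} \ge (1-2\eps)d$, and then establish the two assertions separately: that $T$ is nonempty, and that every $(j,\omega)\in T$ forces $\omega = x_j$.

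For nonemptiness I would run the standard unique-neighbour count. Writing $u_j$ for the number of \emph{unique} neighbours of $j\in S$ (those $i\in\Nc(j)$ with $S_i=\{j\}$), a double count of the $d\abs S$ edges emanating from $S$, combined with the expansion bound $\abs{\Nc(S)} > (1-\eps)d\abs S$, shows that the number of edges landing on collision vertices (those $i$ with $\abs{S_i}\ge 2$) is at most $2\eps d\abs S$. Hence $\sum_{j\in S} u_j > (1-2\eps)d\abs S$, and by averaging some $j\in S$ satisfies $u_j > (1-2\eps)d$. Each such unique neighbour has $y_i = x_j$, so $(j,x_j)\in T$ and $T\neq\emptyset$.

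For correctness I would take any $(j,\omega)\in T$ and examine the $\ge (1-2\eps)d$ indices $i$ with $y_i=\omega$. Here the dissociation hypothesis enters decisively: since $\omega = \sum_{j'\in S_i} x_{j'}$ and distinct subsets of $S$ have distinct sums, all these $i$ must share one common signature $S_i = S_0 \subseteq S$. They are therefore simultaneous common neighbours of every vertex in $S' := S_0\cup\{j\}$. Setting $p=\abs{S'}$ and counting edges (each common neighbour absorbs exactly $p$ of the $dp$ edges from $S'$, so $\abs{\Nc(S')}\le dp - c(p-1)$ where $c$ is the number of common neighbours), the expansion inequality forces $c < \frac{\eps dp}{p-1}\le 2\eps d$ whenever $p\ge 2$. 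Since $\eps<\frac14$ gives $(1-2\eps)d > 2\eps d$, this contradicts $c \ge (1-2\eps)d$. Thus $p=1$, i.e.\ $S'=\{j\}$, and tracing back the cases yields $\omega = x_j$.

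The main obstacle will be the bookkeeping in the correctness step: one must apply the expansion property only to sets of size at most $k$ and verify a short case list ($j\in S$ versus $j\notin S$, and $\abs{S_0}\in\{0,1,\ge 2\}$) so that the conclusion $\omega=x_j$ survives in each. In particular, one checks that when $j\in S$ the signature must contain $j$ and collapse to $S_0=\{j\}$, while when $j\notin S$ the only surviving possibility is $S_0=\emptyset$, giving $\omega = 0 = x_j$. The edge-counting itself is routine once the common-neighbour picture is set up; the conceptual content is the interplay of dissociation (which pins down a single signature $S_0$) with expansion (which caps common neighbours of any set of size $\ge 2$).
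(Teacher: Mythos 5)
The paper states this lemma without proof, citing it from \cite{Tanner:2015aa}, so there is no in-paper argument to compare against; judged on its own, your proposal follows the standard route for this result and is essentially correct. The nonemptiness half is right: with $S=\supp(x)$ the edge count $d\abs{S}=\abs{U}+\sum_{i\in C}\abs{S_i}\ge\abs{U}+2\abs{C}$ together with $\abs{\Nc(S)}=\abs{U}+\abs{C}>(1-\eps)d\abs{S}$ gives $\abs{U}>(1-2\eps)d\abs{S}$, averaging produces a $j\in S$ with more than $(1-2\eps)d$ unique neighbours, and each such neighbour satisfies $y_i=x_j$ (the degenerate case $x=0$, where any $(j,0)$ works, should be noted). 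The identification half is also sound in outline: dissociation forces all indices $i$ with $y_i=\omega$ to share one signature $S_0$, the case $S_0=\emptyset$ yields $j\notin S$ and $\omega=0=x_j$, and the common-neighbour count against expansion kills $\abs{S_0\cup\{j\}}\ge 2$ since $c<\eps dp/(p-1)\le 2\eps d<(1-2\eps)d$ when $\eps<\tfrac14$. The one point you flag but do not close is real: $S_0\cup\{j\}$ can have cardinality $k+1$ when $j\notin\supp(x)$ and $\abs{S_0}=k$, and the expansion property in Definition \ref{def:expander_matrix} only covers sets in $[n]^{(\le k)}$. The clean repair is to run your edge count on the two-element set $\{j,j''\}$ for any $j''\in S_0$: its $2d$ edges and $\abs{\Nc(\{j,j''\})}>2(1-\eps)d$ bound the number of common neighbours of $j$ and $j''$ by $2\eps d$, which already contradicts $c\ge(1-2\eps)d$, while keeping the set size at $2\le k$. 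With that substitution your argument is complete (up to the trivial caveat $k\ge 2$), and it is, as far as one can tell, the same unique-neighbour-plus-dissociation argument underlying \cite{Tanner:2015aa}.
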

This means that at each iteration, if the residual $r$ is non-zero, i.e. if we have not yet found the correct $x$, then there is a set of entries in $x$ that we can change so that we reduce the number of non-zeros in $r$ by at least $|T|(1- 2 \varepsilon)d$.

\begin{theorem}[Convergence of Algorithm \ref{alg:parallel-l0} \cite{Tanner:2015aa}]\label{th:l0-decoding-convergence}
	Let $A\in \expander$ and let $\varepsilon\le \frac{1}{4}$, and $x\in\chi_k^n$ be dissociated.
	Then, Parallel-$\ell_0$ with $\alpha = (1-2 \varepsilon)d$ can recover $x$ from $y=Ax$ in $\Oc(\log k)$ iterations of complexity $\Oc(dn)$.
\end{theorem}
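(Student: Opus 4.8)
The plan is to prove the two halves of the claim separately: the per-iteration cost of $\Oc(dn)$ and the $\Oc(\log k)$ bound on the number of outer iterations. The cost is the routine half. In one pass the outer loop visits all $n$ coordinates, and for a fixed $j$ the scores $n_e$ and $n_z$ depend only on the $d$ residual entries indexed by $\Nc(j)$; counting zeros gives $n_z$ in $\Oc(d)$ time, and sorting (or hashing) those $d$ values yields every count $\abs{\{\ell\in\Nc(j):r_i=r_\ell\}}$ needed for $n_e$, so each coordinate costs $\Oc(d)$ and the sweep, together with forming the residual $y-A\hat x$, costs $\Oc(dn)$.

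The substance lies in controlling the outer iterations, and I would organise this around two facts: that every update the algorithm commits is exact, and that each iteration corrects a constant fraction of the coordinates still to be found. For correctness, note first that because $x$ is dissociated no nonempty subset of its support sums to zero, so for an active (not yet recovered) support coordinate $j$ every neighbour $i\in\Nc(j)$ has $r_i\neq0$, forcing $n_z=0$, and moreover $r_i=r_\ell$ can hold only when the two rows are hit by identical subsets of the active support; since each such row also contains $j$, the value $\omega$ attaining the score is pinned to $x_j$, and the unique-neighbour bound of the preceding Lemma gives $n_e\ge(1-2\varepsilon)d=\alpha$. For an off-support coordinate I would rule out a false update by a pairing argument: a spurious common value shared by $(1-2\varepsilon)d$ neighbours would force that many rows to be hit simultaneously by $j$ and some active coordinate $j'$, contradicting the expansion of the pair $\{j,j'\}$ precisely when $\varepsilon<\tfrac14$. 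Thus $\hat x$ never acquires an incorrect entry, and the residual stays equal to $A(x-\hat x)$ with $x-\hat x$ supported on a subset of $\supp(x)$, which is again sparse and dissociated, so the whole analysis reapplies verbatim at the next iteration.

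With correctness in hand, the iteration count follows from a geometric-decay argument: if each iteration corrects at least a constant fraction $c$ of the remaining support, then after $t$ iterations at most $k(1-c)^t$ coordinates are left, and $t=\Oc(\log k)$ suffices to drive this below one. To obtain such a $c$ I would pass from the preceding Lemma, which only asserts that the recoverable set $T$ is nonempty, to a quantitative bound $\abs{T}\ge c\abs{\supp(x-\hat x)}$ via a refined edge count: the expansion of the active support $S$ of size $s$ leaves more than $(1-2\varepsilon)ds$ rows hit exactly once, and distributing these unique rows over the $s$ coordinates, together with the exactness of each committed score, is what I would use to argue that a constant fraction of them carry at least $(1-2\varepsilon)d$ unique neighbours and hence pass the test.

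The main obstacle is exactly this last step. The naive averaging of unique neighbours only yields that their mean exceeds the threshold $(1-2\varepsilon)d$, which by itself guarantees nothing stronger than $\abs{T}\ge1$ and would give a linear, not logarithmic, iteration count; extracting a genuine constant-fraction lower bound that holds uniformly over all iterations — either by sharpening the combinatorial count away from the boundary $\varepsilon=\tfrac14$ or by exploiting the randomness of the generating model — is where the real difficulty sits, and is what ultimately converts the per-iteration progress into the stated $\Oc(\log k)$ bound.
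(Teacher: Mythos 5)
The first thing to note is that this paper does not actually prove Theorem \ref{th:l0-decoding-convergence}: the result is imported wholesale from \cite{Tanner:2015aa}, and the only supporting material supplied here is the preceding lemma, which asserts that the set $T$ of identifiable pairs $(j,\omega)$ is \emph{nonempty}, together with the one-line remark that each iteration therefore reduces $\norm{r}_0$ by at least $|T|(1-2\varepsilon)d$. So there is no in-paper proof to match your argument against; the comparison has to be made against what a complete proof would require.

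Measured against that, your per-iteration cost analysis and your correctness argument (dissociativity rules out accidental zeros and accidental equalities; expansion applied to the pair $\{j,j'\}$ with $\varepsilon<\tfrac14$ rules out false positives; hence $\hat x$ only ever acquires correct entries and the hypotheses propagate to the next iteration) are sound in outline and consistent with the mechanism the paper sketches. The genuine gap is exactly the one you flag yourself: the step from ``$T$ is nonempty'' to ``$|T|\ge c\,|\supp(x-\hat x)|$ for a fixed constant $c>0$''. Your proposed route --- count the more than $(1-2\varepsilon)ds$ rows hit exactly once by the active support $S$ of size $s$ and distribute them over the $s$ coordinates --- provably cannot deliver this on its own: since each coordinate has at most $d$ unique neighbours while the membership threshold for $T$ is $(1-2\varepsilon)d$, the extremal configuration places just under $(1-2\varepsilon)d$ unique neighbours on all but one coordinate, and the averaging bound collapses to $|T|\ge1$, which yields only an $\Oc(k)$ iteration count. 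Closing this requires a quantitative progress lemma that appears neither in this paper nor in your write-up --- for instance a second application of the expansion property to the sub-collection of coordinates failing the test, or a direct multiplicative contraction bound on $\norm{r}_0$ --- and without it the $\Oc(\log k)$ claim, which is the entire content of the theorem beyond the routine cost count, remains unproven. As submitted, the proposal is an honest and well-organised plan, but it is not a proof of the stated iteration bound; you would need to consult or reconstruct the progress lemma of \cite{Tanner:2015aa} to finish it.
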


To put this result into context and show its applicability, we recall the remark after Definition \ref{def:expander_model}, stating that random matrices as considered in this work are indeed expander matrices with high probability.

We furthermore emphasize the fact that the algorithm is designed in a way that allows for massively parallel implementations.

%

\section{Main contributions: $\ell_0$-decoding for Noisy Measurements}
\label{sec:l0-decoding-noisy}

We now consider the case where the measurements $y$ are subject to additive noise, i.e. instead of $y=Ax$, we measure $\hat y = y + \eta$ where $\eta$ is a realization of a random variable with $\eta_i \sim \nu$.

Parallel-$\ell_0$ is not able to cope with additive noise, as it needs to make decisions whether a value in $\hat y$ is zero and whether two values in $\hat y$ are equal to each other. 
While for very small noise levels we could consider two values as equal if they are within a certain number of standard deviations, for larger noise levels the decision becomes more challenging.
A discussed in Section \ref{sec:introduction} we need to know $p_z(\hat y_i)$ and $p_e(\hat y_{i_1} - \hat y_{i_2})$ which correspond, respectively, to the probability of $y_i = 0$ given that we observe $\hat y_i$ and the probability of $y_{i_1} = y_{i_2}$ given that we observe $\hat y_{i_1} - \hat y_{i_2}$.
%
%
%
The functions $p_z$ and $p_e$ depend on the parameters fed into the generative model in Definition \ref{def:expander_model} and in particular on the distribution of $y$ and $\hat y$.
Hence,
since $y$ is a vector of sparse inner products we need to understand the limiting behaviour of sparse sums.

We include Definition \ref{def:measures} in order to remind the reader
of some actions on measures used in this manuscript as so as to be
relatively self contained.

%


\begin{definition}[Measures \cite{Klenke:2013aa}]
\label{def:measures}
Let $\mathcal{B}(\R)$ denote the Borel $\sigma$-algebra over $\R$.
If $E \in \mathcal{B}(\R)$ let $-E:= \{-x : x \in E\}$.
Let $\mu \in \mathbb{D}(\R)$, we define the following measures.
\begin{enumerate}
\item The $q$-convolution, 
\begin{align*}
    \mu_0(E) &= \delta_0(E) = \left\{\begin{array}{ll}
 1 & 0 \in E\\
 0 & 0 \notin E
 \end{array}\right., \forall E \in \mathcal{B}(\R)\\
\mu_1(E) &= \mu(E), \forall E \in \mathcal{B}(\R)\\
\mu_{q+1}(E) &= (\mu_{q} \ast \mu)(E), \forall E \in \mathcal{B}(\R), q \in \mathbb{N}
\end{align*}
\item The negative measure,
\begin{equation*}
\mu^{-}(E) = \mu(-E), \;\; \forall E \in \mathcal{B}(\R)
\end{equation*}
\item The symmetrized measure,
\begin{equation*}
\bar\mu(E) = \frac{\mu(E) + \mu(-E)}{2}, \;\; \forall E \in \mathcal{B}(\R)
\end{equation*}
\item The measure associated with the difference of two random variables,
\begin{equation*}
\tilde \mu(E) = (\mu \ast \mu^{-})(E), \;\; \forall E \in \mathcal{B}(\R).
\end{equation*}
\end{enumerate}
\end{definition}


\begin{lemma}[Limiting distribution for sparse sums of random variables\label{prop:1}]
\label{lemma:sparse_sums}
Let $p\in (0,1)$, let $\mu \in \mathbb{D}(\R)$ and let $\mu_q \in \mathbb{D}(\R)$ be its the $q$-fold convolution.
For each $n\ge 1$, let
\begin{equation}
s_n {:=} \sum_{j=1}^n b_j x_j
\end{equation}
be such that,
\begin{enumerate}
\item $x_j \simiid \mu$ for each $j \in [n]$,
\item $b_j \simiid \mathrm{Ber}(\frac{p_n}{n})$ for each $j \in [n]$ with $p_n \to p \in \R$ as $n \to \infty$.
\end{enumerate}
Then, as $n\to \infty$ it holds that $s_n \overset{(d)}{\to} s$
where
\begin{equation}
\label{eq:sparse_sums}
s\sim \exp(-p)\sum_{q \geq 0} \frac{p^q}{q!}\mu_q.
\end{equation}
\end{lemma}

\begin{proof}
Let $\psi_{s_n}(t)$ be the characteristic function of $s_n$.
Let $x \sim \mu$, then
\begin{align*}
\psi_{s_n} (t) &= \Eb\left[ \exp(it s_n)\right]\\
&= \prod_{j=1}^n \Eb\left[\exp(it b_j x_j)\right]\\
&= \left(\left(1-\frac{p_n}{n}\right) + \left(\frac{p_n}{n}\right)\psi_x(t)\right)^n\\
&= \left( 1 + \frac{p_n (\psi_x(t) - 1)}{n}\right)^n.
\end{align*}
%
Taking the limit $n\to \infty$ we see that
\begin{align}\label{eq:char_s}
\lim_{n \rightarrow \infty} \psi_{s_n}(t)=\exp(-p)\exp(p \psi_x(t)).
\end{align}
Letting $w_q = \sum_{j = 1}^q x_j$ it holds by the independence of $\{x_1, \dots, x_q\}$ that $w_q \sim \mu_q$ and
\begin{equation}
\left[\psi_x(t)\right]^q = \psi_{w_q}(t).
\end{equation}
Now, consider a random variable $z$ distributed according to
\begin{equation}
z \sim \exp(-p) \sum_{q\geq 0} \frac{p^q}{q!}\mu_q.
\end{equation}
The characteristic function of $z$ is given by
\begin{align}
\psi_z(t) &= \mathbb{E}\left[\exp(itz)\right] \nonumber \\
&=\exp(-p) \sum_{q\geq 0} \frac{p^q}{q!}\psi_{w_q}(t) \nonumber \\
&=\exp(-p) \sum_{q\geq 0} \frac{p^q}{q!}\left(\psi_{x}(t)\right)^q \nonumber \\
&=\exp(-p) \sum_{q\geq 0} \frac{\left(p \psi_x(t)\right)^q}{q!} \nonumber \\
\label{eq:char_z}
&=\exp(-p) \exp(p\psi_x(t)).
\end{align}
Therefore \eqref{eq:char_z} equals \eqref{eq:char_s}.
By L\'evy's continuity Theroem pointwise convergence of the characteristic functions implies weak convergence of the random variables (cf. \cite[Theorem 15.23]{Klenke:2013aa}) and hence the statement follows.
\end{proof}



\begin{theorem}[Distribution of $\hat y_i$ and $\hat y_{i_1} - \hat y_{i_2}$]
\label{thm:main}
Fix $\delta, \rho \in (0,1)$ and for $n\in \N$ let $m=\delta n$, $k=\rho m$ and $d\ll m$.
Furthermore, let $\mu$ and $\nu$ be measures and assume that $\hat y = y + \eta = Ax + \eta$ is drawn from the model $\mathrm{GM}(n, m, k, d, \mu, \nu)$.
Then as $n\to \infty$
\begin{equation*}
\hat y_i \overset{(d)}{\to} \hat y_i^* \;\;\;\mbox{ and } \;\;\;\hat y_{i_1} - \hat y_{i_2} \overset{(d)}{\to} \hat g^*
\end{equation*}
where
\begin{align}
\label{eq:dist_yhat_star}
\hat y_i^* &\sim \exp(-d\rho)\sum_{q\ge 0} \frac{(d\rho)^q}{q!} \nu \ast \mu_q, \\
\label{eq:dist_ghat_star}
\hat g^* &\sim \exp(-2d\rho)\sum_{q\ge 0} \frac{(2d\rho)^q}{q!} \tilde\nu \ast \bar\mu_q.
\end{align}
\end{theorem}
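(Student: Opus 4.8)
The plan is to recognise each coordinate $y_i=(Ax)_i=\sum_{j=1}^n A_{ij}x_j$ as a sparse sum of the type handled by Lemma~\ref{lemma:sparse_sums}, to add the independent noise $\eta_i$ by convolution, and then to repeat the argument for the row difference. The only genuine dependence in the model is the constraint $|\supp(x)|=k$, which couples the indicators $\mathbf 1_{j\in\supp(x)}$; I would remove it at the outset by conditioning on $\supp(x)=S$ with $|S|=k$. Conditionally, $y_i=\sum_{j\in S}A_{ij}x_j$, and since distinct columns of $A$ are drawn independently and $x$ is independent of $A$, the summands $\{A_{ij}x_j\}_{j\in S}$ are i.i.d. This places us exactly in the setting of Lemma~\ref{lemma:sparse_sums}, and because the resulting limit will not depend on $S$, the conditional limit coincides with the unconditional one.

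For the first claim, fix $i$ and condition on $\supp(x)=S$. Each $A_{ij}$ is Bernoulli with $\Pb(A_{ij}=1)=d/m$ (the chance that $i$ lies in a uniform $d$-subset of $[m]$), independently across $j\in S$, and $x_j\simiid\mu$. Writing the sum over the $k=|S|$ terms as $\sum_j b_j x_j$ with $b_j\simiid\mathrm{Ber}(d/m)$, the effective rate is $k\cdot(d/m)=\rho m\cdot(d/m)=d\rho$, so Lemma~\ref{lemma:sparse_sums} with base measure $\mu$ and $p=d\rho$ gives $y_i\overset{(d)}{\to}\exp(-d\rho)\sum_{q\ge0}\frac{(d\rho)^q}{q!}\mu_q$. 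Since $\eta_i\sim\nu$ is independent of $y_i$, I would pass to characteristic functions: $\psi_{\hat y_i}=\psi_{y_i}\,\psi_\nu$, and as $\psi_{y_i}$ converges to the transform of the limit law above while $\psi_\nu$ stays fixed, the product converges to that transform multiplied by $\psi_\nu$, which is precisely the transform of the convolution of $\nu$ with the limit of $y_i$. Distributing $\nu\ast(\cdot)$ through the series yields \eqref{eq:dist_yhat_star}, and Lévy's continuity theorem finishes this part.

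For the difference I would write $\hat y_{i_1}-\hat y_{i_2}=(y_{i_1}-y_{i_2})+(\eta_{i_1}-\eta_{i_2})$. The noise part is immediate: $\eta_{i_1},\eta_{i_2}\simiid\nu$ are independent, so their difference has law $\nu\ast\nu^-=\tilde\nu$ in the notation of Definition~\ref{def:measures}. The signal part is where the genuine new work lies, because $y_{i_1}-y_{i_2}=\sum_{j\in S}(A_{i_1j}-A_{i_2j})x_j$ has coefficients in $\{-1,0,1\}$, so Lemma~\ref{lemma:sparse_sums} does not apply verbatim. The key computation is the joint law of $(A_{i_1j},A_{i_2j})$ coming from a uniform $d$-subset of $[m]$: one finds $\Pb(A_{i_1j}-A_{i_2j}=+1)=\Pb(A_{i_1j}-A_{i_2j}=-1)=\frac{d(m-d)}{m(m-1)}$, so the per-term nonzero probability is $q_m=\frac{2d(m-d)}{m(m-1)}$, and conditionally on being nonzero the coefficient is $\pm1$ with equal probability; hence the nonzero summand $\pm x_j$ has the symmetrised law $\bar\mu=\tfrac12(\mu+\mu^-)$. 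Each term therefore has law $(1-q_m)\delta_0+q_m\bar\mu$, with characteristic function $1+q_m(\psi_{\bar\mu}-1)$, and the conditional transform of the sum is $[1+q_m(\psi_{\bar\mu}-1)]^{k}$. Since $k\,q_m=2\rho d\,\frac{m-d}{m-1}\to2d\rho$ as $n\to\infty$ (using $d\ll m$), this converges to $\exp(-2d\rho)\exp(2d\rho\,\psi_{\bar\mu})$, i.e. the transform of $\exp(-2d\rho)\sum_{q\ge0}\frac{(2d\rho)^q}{q!}\bar\mu_q$. Convolving with the independent difference law $\tilde\nu$ exactly as before produces \eqref{eq:dist_ghat_star}.

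The main obstacle is this last step: recognising that the three-valued coefficient collapses, once conditioned on being nonzero, into a Bernoulli mask times a draw from the symmetrised measure $\bar\mu$, and that the doubled rate $2d\rho$ arises because either row may contribute. After this identification one is back in the regime of Lemma~\ref{lemma:sparse_sums}, or equivalently a one-line rerun of its characteristic-function computation with $\mu$ replaced by $\bar\mu$ and $p$ by $2d\rho$. The remaining points—the vanishing of the combinatorial correction factor $\frac{m-d}{m-1}\to1$, and the harmless role of the $|\supp(x)|=k$ constraint after conditioning—are routine.
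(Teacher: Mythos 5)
Your proof is correct and follows essentially the same route as the paper's: reduce each coordinate (and each coordinate difference) to a sparse Bernoulli-masked sum, apply the Poisson-mixture limit of Lemma~\ref{lemma:sparse_sums} with rates $d\rho$ and $2d\rho$ and base measures $\mu$ and $\bar\mu$ respectively, and convolve with the independent noise law $\nu$ (resp.\ $\tilde\nu$). Your preliminary conditioning on $\supp(x)=S$ to make the summands genuinely i.i.d.\ is a small refinement that the paper glosses over (it treats the indicators $\ind_{j\in\supp(x)}$ as independent), but it does not change the substance of the argument.
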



\begin{proof}
To show \eqref{eq:dist_yhat_star}, let $i \in [m]$ and
\begin{equation*}
\label{eq:sparse_sum_cs}
y_i = \sum_{j=1}^n A_{i,j} x_j.
\end{equation*}
By our assumptions on $A$ and $x$,
\begin{align*}
\Pb\left(A_{i,j}x_j \neq 0\right) &= \Pb\left( A_{i,j} \neq 0 \wedge x_j \neq 0 \right)\\
&= \Pb\left( A_{i,j} \neq 0\right) \Pb\left(x_j \neq 0 \right)\\
&= \frac{d}{m} \frac{k}{n}\\
&= \frac{d \rho}{n}
\end{align*}
where for two events $E_1$ and $E_2$ we let $E_1 \wedge E_2$ be the conjunction of the events.
Note that if $A_{i,j}x_j \neq 0$, then $j \in \supp(x)$ so $A_{i,j}x_j = x_j \sim \mu$.
Hence, letting $b_j \sim \mathrm{Ber}\left(\frac{d\rho}{n}\right)$,
\begin{equation*}
  y_i \overset{(d)}{=} \sum_{j=1}^n b_j x_j.
\end{equation*}
Invoking Lemma \ref{lemma:sparse_sums} with $p_n= p = d\rho$ we obtain that  $y_i \rightarrow y_i^*$ as $n \to \infty$ where
\begin{equation*}
\label{eq:dist_y_star}
y_i^* \sim \exp(-d\rho) \sum_{q\geq 0} \frac{(d\rho)^q}{q!}\mu_q.
\end{equation*}
By the independence of $y_i^*$ and $\eta_i$, since $\hat y_i = y_i + \eta_i$, the distribution of $\hat y_i^*$ is given by
\begin{equation}
\label{eq:dist_y_star_nu}
\hat y_i^* \sim \left(\exp(-d\rho) \sum_{q\geq 0} \frac{(d\rho)^q}{q!}\mu_q\right)\ast \nu.
\end{equation}
Equation \eqref{eq:dist_yhat_star} follows from \eqref{eq:dist_y_star_nu} and the distributivity of the convolution operator.

To show \eqref{eq:dist_ghat_star}, let $i_1, i_2 \in [m]$ be such that $i_1 \neq i_2$ and let
\begin{equation*}
y_{i_1} - y_{i_2} = \sum_{j=1}^n \left(A_{i_1, j} - A_{i_2, j}\right)x_j.
\end{equation*}
Similarly to the previous case, we compute
\begin{align*}
\Pb\left(\left(A_{i_1,j} - A_{i_2,j} \right) x_j \neq 0\right) &= \Pb\left( A_{i_1,j} - A_{i_2,j} \neq 0 \wedge x_j \neq 0 \right)\\
&= \Pb\left( A_{i_1,j} - A_{i_2, j} \neq 0\right) \Pb\left(x_j \neq 0 \right)\\
&= 2\frac{d}{m}\frac{(m-1) - (d-1)}{m-1} \frac{k}{n}\\
&= \frac{2d\rho}{n}\left(1 - o(1)\right)
\end{align*}
Note that if $(A_{i_1,j} - A_{i_2, j})x_j \neq 0$, then $j \in \supp(x)$ and $(A_{i_1, j} - A_{i_2,j})$ is either $+1$ with probability $\frac{1}{2}$ or $-1$ with probability $\frac{1}{2}$. Then,
Hence,
\begin{equation*}
(A_{i_1,j} - A_{i_2, j})x_j \sim \left\{\begin{array}{ll}
\mu & \mbox{with probability $\frac{1}{2}$},\\
\mu^{-} & \mbox{with probability $\frac{1}{2}$},
\end{array}\right.
\end{equation*}
then,
\begin{equation*}
(A_{i_1,j} - A_{i_2, j})x_j \sim \bar \mu.
\end{equation*}
Letting $b'_j \sim \mathrm{Ber}\left(\frac{2d\rho}{n}(1 - o(1))\right)$,
\begin{equation*}
y_{i_1} - y_{i_2} \overset{(d)}{=} \sum_{j=1}^n b_j'x_j.
\end{equation*}
Again, invoking Lemma \ref{lemma:sparse_sums} with $p_n = {2d\rho}(1 - o(1))$ we obtain that $p = 2d\rho$ and also that as $n \rightarrow \infty$, $y_{i_1} - y_{i_2} \to g^*$ with
\begin{equation}
\label{eq:dist_yy_star}
y_{i_1}^* - y_{i_2}^* \sim \exp(-2d\rho) \sum_{q\geq 0} \frac{(2d\rho)^q}{q!}\bar \mu_q.
\end{equation}
Therefore,
Given that $\eta_{i_1} - \eta_{i_2} \sim \nu \ast \nu^{-}$ and that
\[
\hat y_{i_1} - \hat y_{i_2} = \left(y_{i_1} - y_{i_2}\right) + \left(\eta_{i_1} - \eta_{i_2}\right),
\]
we convolve \eqref{eq:dist_yy_star} with $\nu \ast \nu^{-}$ to recover \eqref{eq:dist_ghat_star}.
\end{proof}

We are now ready to prove Theorem \ref{th:probabilities_general},

\begin{proof}[Proof. Theorem \ref{th:probabilities_general}]
Using Bayes rule we write
\begin{equation}
\label{eq:prob_being_zero-1}
\Pb(y_i = 0 \vert \hat y_i = \omega) =  \frac{\Pb( \hat y_i = \omega \wedge y_i = 0 )}{\Pb(\hat y_i = \omega)}.
\end{equation}
From \eqref{eq:dist_yhat_star} we can deduce that
\begin{equation}
\label{eq:pbz_num}
	\Pb( \hat y_i  = \omega\wedge y_i = 0 ) = \exp(-d\rho)\nu(\omega)
\end{equation}
and using equation \eqref{eq:dist_yhat_star} from Theorem \ref{thm:main} we obtain that as $n \to \infty$,
\begin{equation}
\label{eq:pbz_den}
\Pb(\hat y_i = \omega) \to \exp(-d\rho) \sum_{q\ge0} \frac{(d\rho)^q}{q!} (\nu \ast \mu_q)(\omega).
\end{equation}
Coupling \eqref{eq:prob_being_zero-1}, \eqref{eq:pbz_den} and \eqref{eq:pbz_num} yields \eqref{eq:pz_general}.

Again, by Bayes rule,
\begin{equation}
\label{eq:prob_being_equal-1}
\Pb\left(y_{i_1} = y_{i_2} \vert \hat y_{i_1} - \hat y_{i_2} = \omega\right) =  \frac{\Pb\left(y_{i_1} = y_{i_2} \wedge \hat y_{i_1} - \hat y_{i_2} = \omega\right)}{\Pb\left(\hat y_{i_1} - \hat y_{i_2} = \omega\right)}.
\end{equation}
Noting that $\hat y_{i_1} - \hat y_{i_2} = \left(y_{i_1} - y_{i_2}\right) + \left(\eta_{i_1} - \eta_{i_2}\right)$,
\begin{align}
\label{eq:pbe_num}
\Pb\left(y_{i_1} = y_{i_2} \wedge \hat y_{i_1} - \hat y_{i_2} = \omega\right) &= \tilde \nu (\omega)
\end{align}
By \eqref{eq:dist_yhat_star} from Theorem \ref{thm:main} we obtain that as $n \to \infty$,
\begin{equation}
\label{eq:pbe_den}
\Pb(\hat y_{i_1} - \hat y_{i_2} = \omega) \to \exp(-2d\rho)\sum_{q\ge 0} \frac{(2d\rho)^q}{q!} \tilde\nu \ast \bar\mu_q(\omega)
\end{equation}
Coupling \eqref{eq:prob_being_equal-1}, \eqref{eq:pbe_num}, and \eqref{eq:pbe_den} yields \eqref{eq:pe_general}.

\end{proof}

\subsection{Explicit formulas for the centred Gaussian case}

We further elucidate \eqref{eq:pz_general}-\eqref{eq:pe_general} from Theorem \ref{th:probabilities_general} in the case when $\mu$ and $\nu$ are Gaussian with mean zero, which are the distributions considered in Section \ref{sec:numerical-evidence}.
To this end, let $\mu=\Nc(0, \sigma_s^2)$ and $\nu = \Nc(0, \sigma_n^2)$. 
We observe that in this case $\bar \mu = \mu$, $\mu_q = \bar\mu_q = \Nc(0, q \sigma_s^2)$ and $\tilde \nu = \Nc(0, 2 \sigma_n^2)$.
Hence, denoting by $\varphi(\cdot \mid \sigma^2)$ the probability density function of a centred Gaussian random variable with variance $\sigma^2$, we obtain 
\begin{align}
\label{eq:pz_gaussian}
		p_z(\omega) &\to \frac{\varphi(\omega\mid \sigma_n^2)}{\sum_{q\ge 0} \frac{(d\rho)^q}{q!} \varphi(\omega\mid q\sigma_s^2+\sigma_n^2)}\\
\label{eq:pe_gaussian}
		p_e(\omega) &\to \frac{\varphi(\omega \mid 2\sigma_n^2)}{\sum_{q\ge 0} \frac{(2d\rho)^q}{q!} \varphi(\omega \mid q\sigma_s^2+2\sigma_n^2)}. 
\end{align}

\subsubsection{Estimating the tails in the Gaussian case}

We approximate the infinite sum in in the denominators of \eqref{eq:pz_gaussian} and \eqref{eq:pe_gaussian} by doing an approximation to the tail of this summation.
%

\begin{lemma}[Sums of centred density functions]
\label{lemma:sums_centred_df}
Let $\mu_i$ be the density function of a random varible with mean zero and variance $\sigma_i^2$ and let $\alpha_i > 0$ be such that $\sum_i \alpha_i = 1$.
Then, $\mu = \sum_i \alpha_i \mu_i$ is the density function of a random variable with mean zero and variance $\sum_i \alpha_i \sigma_i^2$.
\end{lemma}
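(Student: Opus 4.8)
The plan is to recognise $\mu = \sum_i \alpha_i \mu_i$ as a \emph{mixture} density and to read off its moments by exchanging summation with integration. First I would check that $\mu$ is a genuine probability density: it is nonnegative, being a sum of nonnegative functions with positive weights, and by Tonelli's theorem $\int_\R \mu(x)\,dx = \sum_i \alpha_i \int_\R \mu_i(x)\,dx = \sum_i \alpha_i = 1$, where the interchange is legitimate precisely because each integrand is nonnegative. This settles that $\mu$ is the density of some random variable $X$.

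For the mean I would compute $\int_\R x\,\mu(x)\,dx = \sum_i \alpha_i \int_\R x\,\mu_i(x)\,dx = \sum_i \alpha_i \cdot 0 = 0$, using that each $\mu_i$ has mean zero. Since the integrand $x\mu_i(x)$ is signed, applying Fubini here requires first verifying absolute integrability: by Cauchy--Schwarz one has $\int_\R |x|\,\mu_i(x)\,dx \le \sigma_i$, whence $\sum_i \alpha_i \int_\R |x|\,\mu_i(x)\,dx \le \sum_i \alpha_i \sigma_i \le \big(\sum_i \alpha_i\big)^{1/2}\big(\sum_i \alpha_i \sigma_i^2\big)^{1/2} = \big(\sum_i \alpha_i \sigma_i^2\big)^{1/2}$, which is finite exactly under the implicit hypothesis that the stated variance $\sum_i \alpha_i \sigma_i^2$ is finite. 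For the variance, since the mean vanishes it equals the second moment, and as $x^2\mu_i(x) \ge 0$ Tonelli again applies directly, giving $\int_\R x^2\,\mu(x)\,dx = \sum_i \alpha_i \int_\R x^2\,\mu_i(x)\,dx = \sum_i \alpha_i \sigma_i^2$, where $\int_\R x^2\,\mu_i = \sigma_i^2$ because $\mu_i$ is centred.

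A cleaner alternative I would consider is a fully probabilistic phrasing: introduce an auxiliary index variable $I$ with $\P{I = i} = \alpha_i$ and let $X$ have conditional density $\mu_i$ given $\{I = i\}$, so that $X$ has density $\mu$. Then $\Eb[X \mid I = i] = 0$ and $\Var(X \mid I = i) = \sigma_i^2$, and the law of total variance yields $\Var(X) = \Eb[\Var(X \mid I)] + \Var(\Eb[X \mid I]) = \sum_i \alpha_i \sigma_i^2 + \Var(0) = \sum_i \alpha_i \sigma_i^2$. In either formulation the only genuine subtlety, and hence the main obstacle, is justifying the interchange of the possibly countably infinite sum with the integral: for the total mass and the second moment this is immediate from Tonelli, while the first-moment interchange is what forces the absolute-integrability bound above and thus the finite-variance assumption.
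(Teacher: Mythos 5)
Your proposal is correct and follows essentially the same route as the paper: the paper's proof is exactly the direct computation $\Var(x)=\int \omega^2\bigl(\sum_i \alpha_i \mu_i(\omega)\bigr)\,d\omega = \sum_i \alpha_i \sigma_i^2$, with the sum--integral interchange and the mean-zero claim left unjustified. Your additions (Tonelli for the nonnegative integrands, the Cauchy--Schwarz bound to justify the first-moment interchange, and the law-of-total-variance rephrasing) are sound refinements of the same argument rather than a different approach.
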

%

\begin{proof}
Let $x \sim \mu$ and $x_i \sim \mu_i$ be such that $\mean{x_i} = 0$ and $\var{x_i} = \sigma_i^2$.
\begin{align*}
\var{x} &= \mathbb{E}[x^2]\\
&= \int \omega^2 \mu(\omega) d\omega\\
&= \int \omega^2 \left(\sum_i \alpha_i \mu_i(\omega)\right) d\omega\\
&= \sum_i \alpha_i \int \omega^2 \mu_i(\omega) d\omega\\
&= \sum_i \alpha_i \sigma_i^2
\end{align*}
\end{proof}

To simplify notation let $\sigma_q^2 = q\sigma_s^2 + \sigma_n$ for $q \in \mathbb{N} \cup \{0\}$.
Consider the series in the denominator of \eqref{eq:pbz_den}
\begin{equation*}
S(\omega):=\sum_{q=0}^{\ell} \frac{(d\rho)^q}{q!} \varphi(\omega \mid \sigma_q^2) + \sum_{q=\ell + 1}^{\infty} \frac{(d\rho)^q}{q!} \varphi(\omega \mid \sigma_q^2).
\end{equation*}
Let,
\begin{equation*}
R_z(\ell):= \exp(d\rho) - \sum_{q = 0}^{\ell} \frac{(d\rho)^q}{q!}
\end{equation*}
and write
\begin{align*}
S_a(\omega)&:=\sum_{q = 0}^{\ell} \frac{(d\rho)^q}{q!} \varphi(\omega \mid \sigma_q^2),\\
S_b(\omega)&:=\sum_{q = \ell + 1}^{\infty} \frac{(d\rho)^q}{q!} \varphi(\omega \mid \sigma_q^2).
\end{align*}
Note that $S_b/R_z$ satisfies the conditions of Lemma \ref{lemma:sums_centred_df} so it corresponds to the density function of a centred random variable with variance
\begin{align*}
\sigma_z^2 &= \frac{1}{R_z}\sum_{q = \ell + 1}^{\ell} \frac{(d\rho)^q}{q!}(q \sigma_s^2 + \sigma_n^2)\\
 &= \frac{\sigma_s^2 (d\rho) R_z(\ell -1) + \sigma_n^2 R_z(\ell)}{R_z(\ell)}
\end{align*}
Therefore,
\begin{equation}
\label{eq:pz_approx}
	p_z(\omega) \approx \frac{\varphi(\omega \mid \sigma_n^2)}{\sum_{q = 0}^\ell \frac{(d\rho)^q}{q!} \varphi(\omega\mid \sigma_q^2) + R_z(\ell) \varphi(\omega \mid \sigma_{z}^2)}.
\end{equation}

A similar argument shows that
\begin{equation}
\label{eq:pe_approx}
	p_e(\omega) \approx \frac{\varphi(\omega \mid 2\sigma_n^2)}{\sum_{q = 0}^\ell \frac{(2d\rho)^q}{q!} \varphi(\omega\mid \sigma_{q,e}^2) + R_e(\ell) \varphi(\omega \mid \sigma_{e}^2)}.
\end{equation}

Where $\sigma_{q,e}^2 = q\sigma_s^2 + \sigma_n^2$, and
\begin{align*}
R_e(\ell) &= \exp(2d\rho) - \sum_{q = 0}^{\ell} \frac{(d\rho)^q}{q!},\\
\sigma_e^2&= \frac{\sigma_s^2 (2d\rho) R_e(\ell -1) + 2\sigma_n^2 R_e(\ell)}{R_e(\ell)}.
\end{align*}

\subsection{Comparison with empirical probabilities}
We test the approximations given in \eqref{eq:pz_approx} and \eqref{eq:pe_approx} by randomly generating $\hat y$ and $y$ according the generating models $\expandermodel(n, \delta n, \rho \delta n, 7, \mathcal{N}(0,1), \mathcal{N}(0, \sigma^2))$, for $\delta=0.3$, $\rho\in\{0.1, 0.3\}$ and $\sigma\in \{10^{-3}, 10^{-2}\}$.
The results can be seen in Figure \ref{fig:empirical-vs-analytical-probabilities}.
%

\begin{figure}
	\centering
	\subfloat[~]{\includegraphics[width=0.49\linewidth]{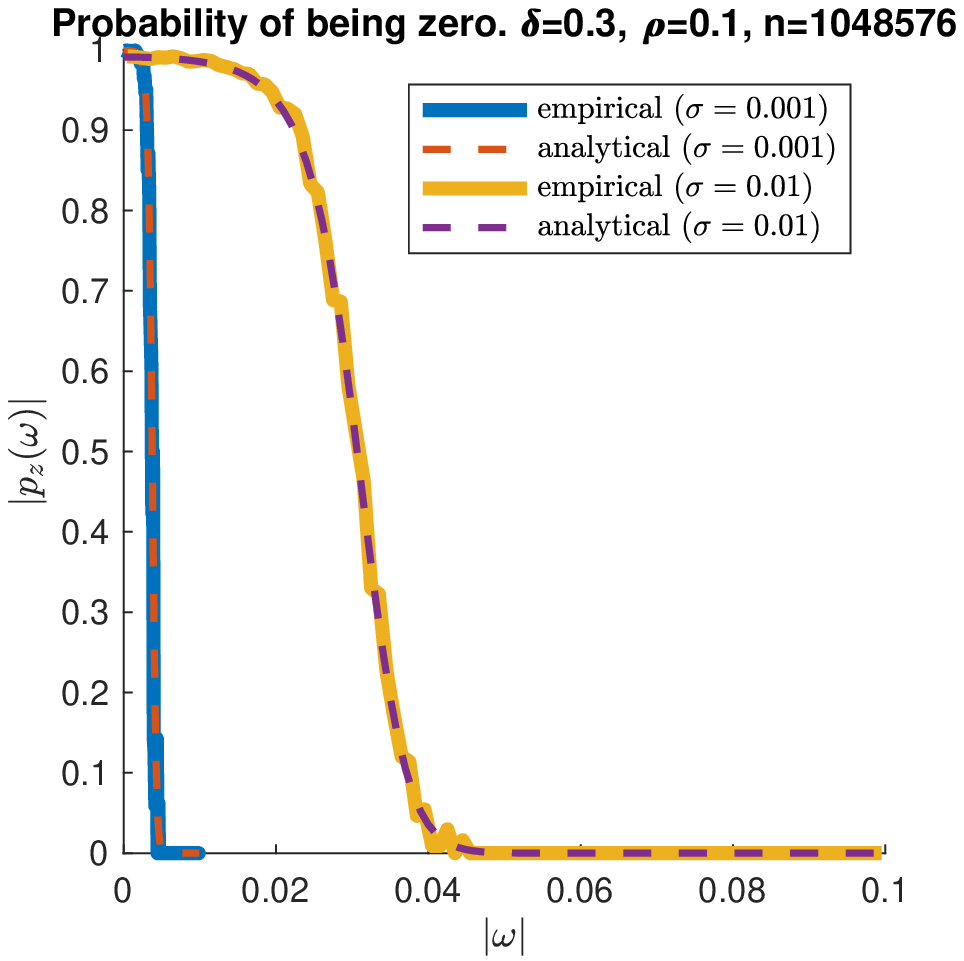}\label{fig:ana-vs-emp-a}}
	\subfloat[~]{\includegraphics[width=0.49\linewidth]{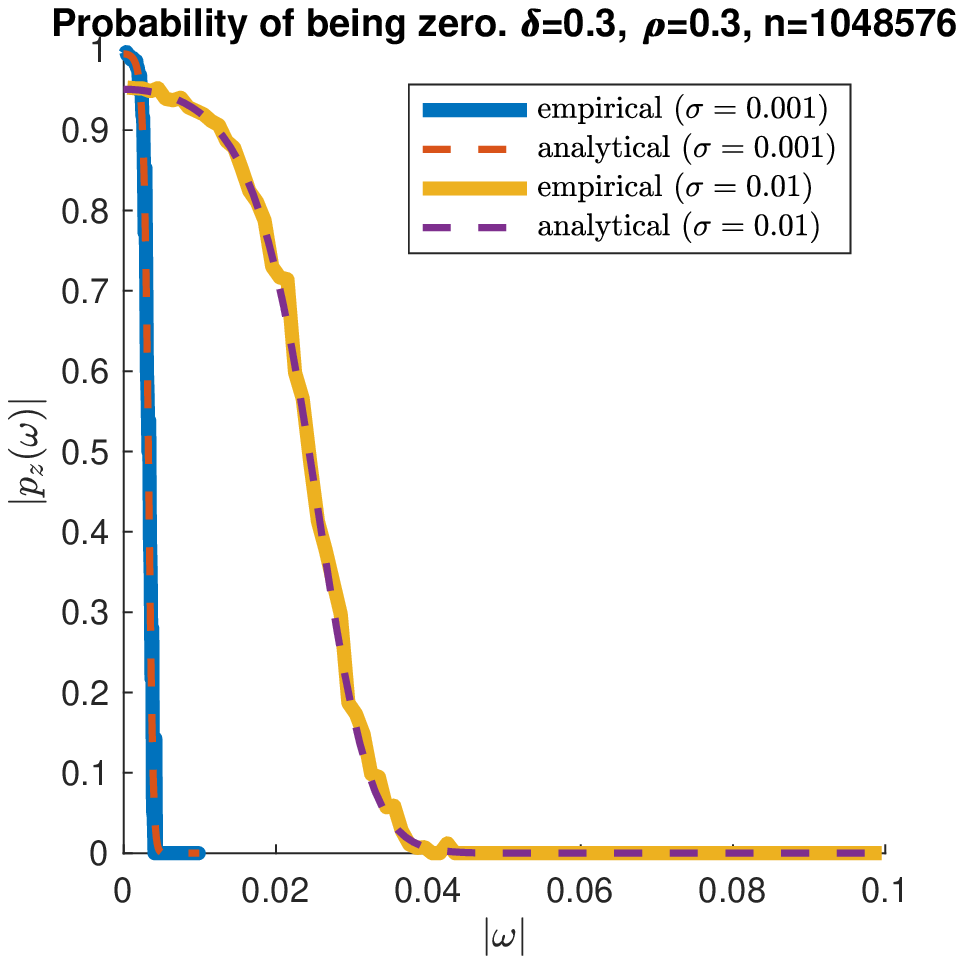}\label{fig:ana-vs-emp-b}}\\
	\subfloat[~]{\includegraphics[width=0.49\linewidth]{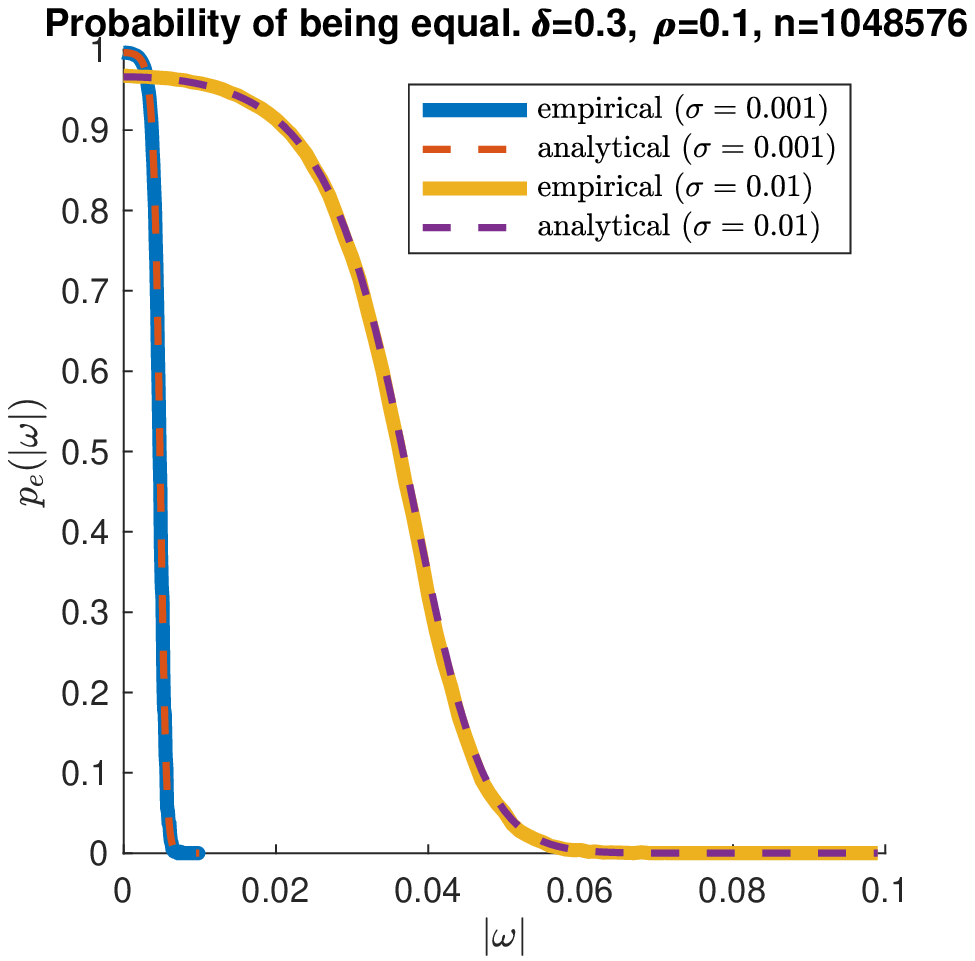}\label{fig:ana-vs-emp-c}}
	\subfloat[~]{\includegraphics[width=0.49\linewidth]{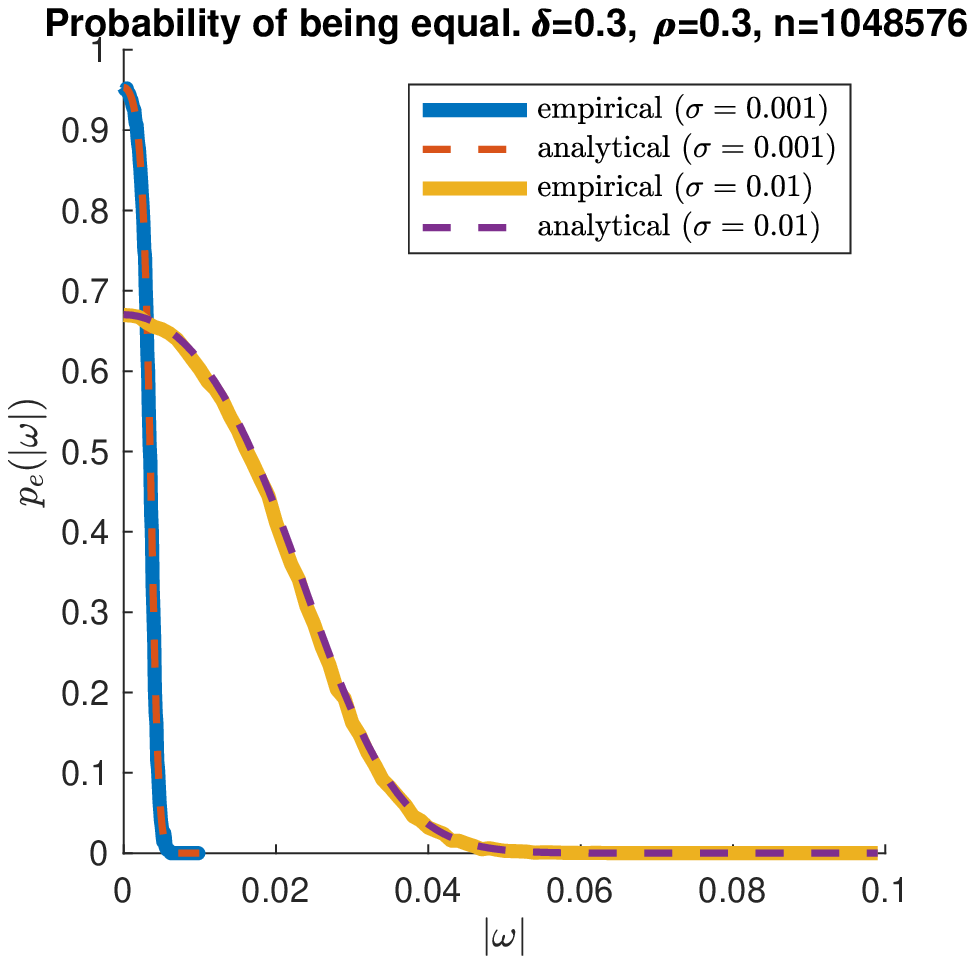}\label{fig:ana-vs-emp-d}}
	\caption{Comparison of analytical and empirical probabilities of a value in the residual being zero or two values in the residual being equal for $\rho\in\set{0.1, 0.3}$ and $\sigma_n\in\set{10^{-3},10^{-2}}$.}
	\label{fig:empirical-vs-analytical-probabilities}
\end{figure}

Overall the analytical expressions fit the empirical probabilities very well, indicating that the approximations we made in the calculations above are justified. 
However, we observe that as $\rho$ and especially $\sigma$ increase, both functions drop significantly. 
This means that for large values of these parameters, the noise eventually dominates and it is difficult to decided whether values are zero or equal.


\subsection{Scaled probabilities $\breve p_z$ and $\breve p_e$}
\label{ssub:scaling-of-probabilities}

We mentioned in Section \ref{sec:introduction} that our algorithms do not implement the functions $p_z$ and $p_e$ exactly, but a scaled version of these.
As observed in Figure \ref{fig:empirical-vs-analytical-probabilities}, the value of $\max_s p_e(s)$ and $\max_s p_z(s)$ varies significantly as $\sigma$ and $\rho$ change.
Algorithm \ref{alg:robust-l0} evaluates whether a given score is large or not by implementing a sweeping parameter $t$ that is set to one at the beginning of the algorithm and decreased by a constant $c$ after every iteration.
In order to use a fixed initial $t$ we consider the scaled probabilities $\breve p_e$ and $\breve p_z$ in $\eqref{eq:breve}$; otherwise the initial value of $t$ would depend on $\sigma$ and $\rho$.
\subsection{Adaptive $k$}

Algorithm \ref{alg:robust-l0} can optionally account for the sparsity of the current estimate via the flag {\tt adaptive\_k}.
In the noiseless model of $r = A\hat x$, an update of the form $\hat x \leftarrow \hat x + \omega e_j$ with Parallel-$\ell_0$ guarantees that $j \in \supp(x)$ so that at the next iteration the problem with residual $r - a_j \omega$ and $(k-1)$-sparse signal is considered.
The {\tt adaptive\_k} flag updates the sparsity prior in $\hat x$ after every update in hope of having more reliable estimates of $p_e$ and $p_z$.
We will see in the numerical experiments that under the data generating model that we tested this strategy does not bring substantial benefits.
We don't rule out the posssibility that there are other signal and noise distributions for which this flag becomes especially useful, but we leave that as future work.

\section{Numerical experiments}
\label{sec:numerical-evidence}

In this section we present numerical experiments which validate the
efficacy of Robust-$\ell_0$ decoding.  In particular, we contrast
Robust-$\ell_0$ with other state-of-the-art greedy algorithms for
compressed sensing in terms of their ability to recover the measured
signal for varying problem sizes $(k,m,n)$ as well as their
computational complexity.  To facilitate reproducibility we begin by describing the stopping
conditions and measures used to denote successful recovery in the
presence of noise in Section \ref{subsec:stop}, along with how the
parameter $c$ is varied in Section \ref{subsec:c}.  We then present
the main numerical results in Section \ref{subsec:phase} where the
algorithms phase transitions and runtime are presented, along with
Sections \ref{subsec:sigma} and \ref{subsec:delta} which show further
details on Robust-$\ell_0$ decoding's performance as a function of
noise variance and for extreme subsampling respectively.

\begin{figure*}[!htbp]
	\centering
  \subfloat[Phase transition $\sigma = 0.001$]{\includegraphics[width=0.24\linewidth]{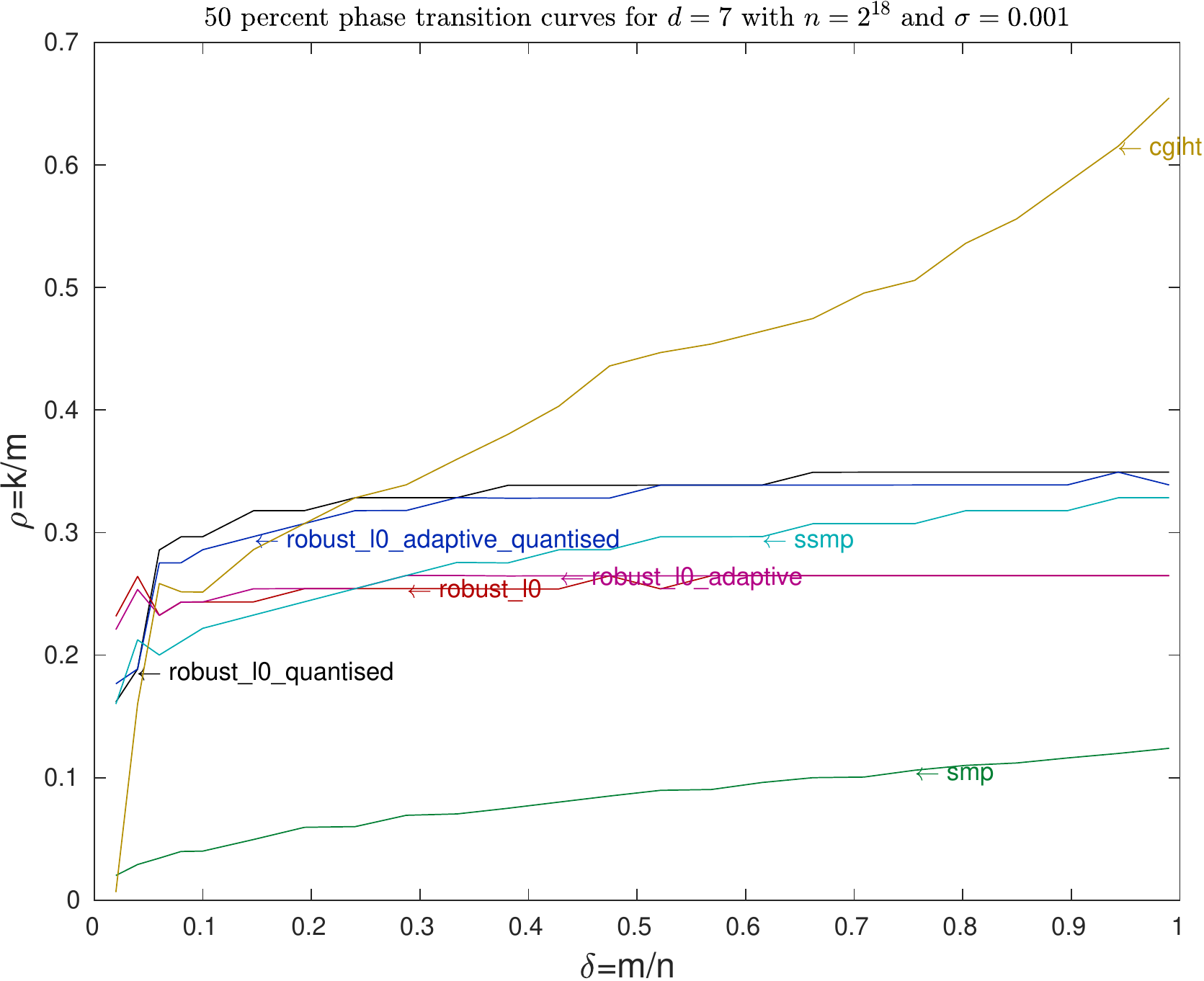} \label{fig:transition_001}}
  \subfloat[Selection map $\sigma = 0.001$]{\includegraphics[width=0.24\linewidth]{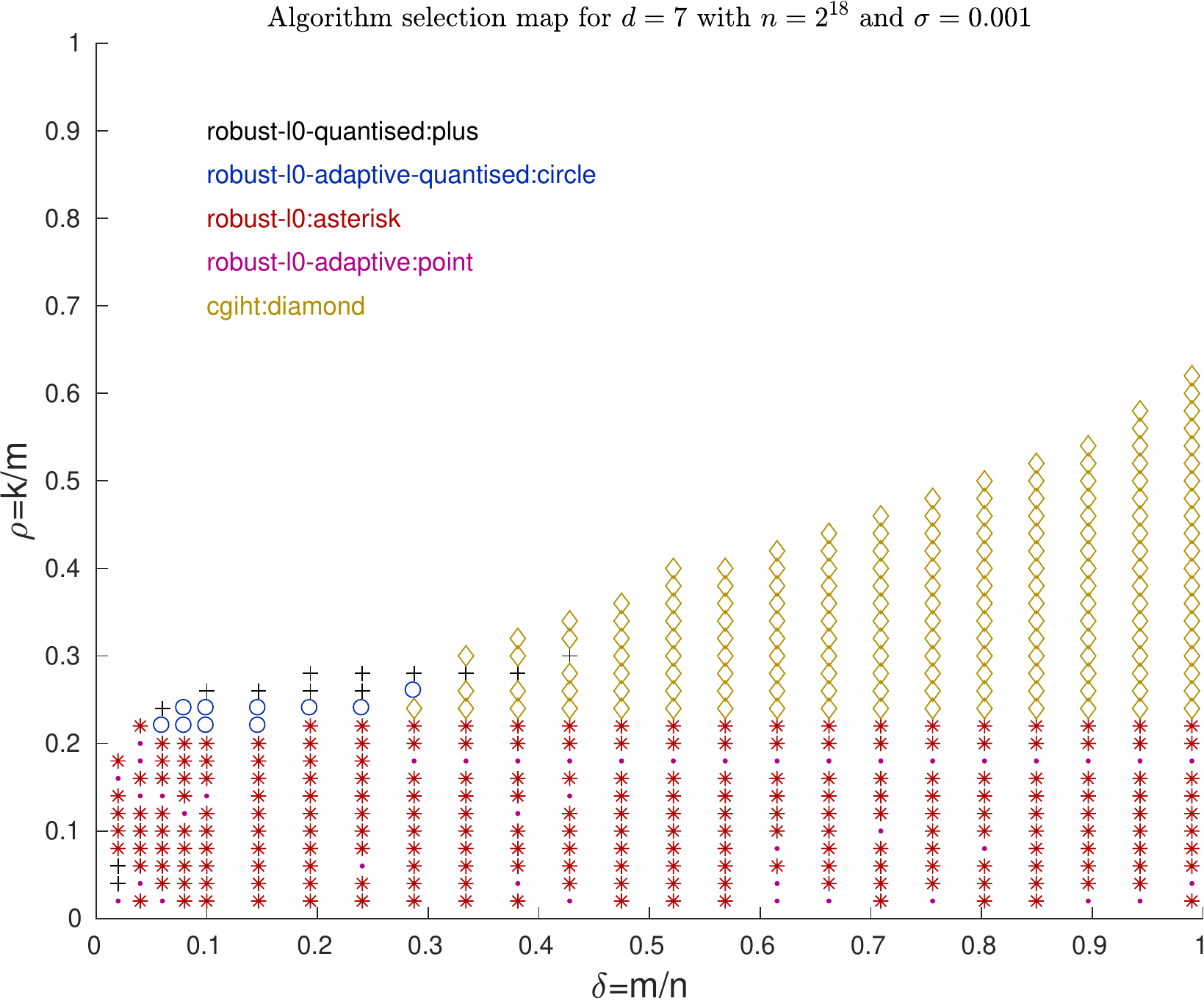} \label{fig:selection_001}}
  \subfloat[Best time $\sigma = 0.001$]{\includegraphics[width=0.24\linewidth]{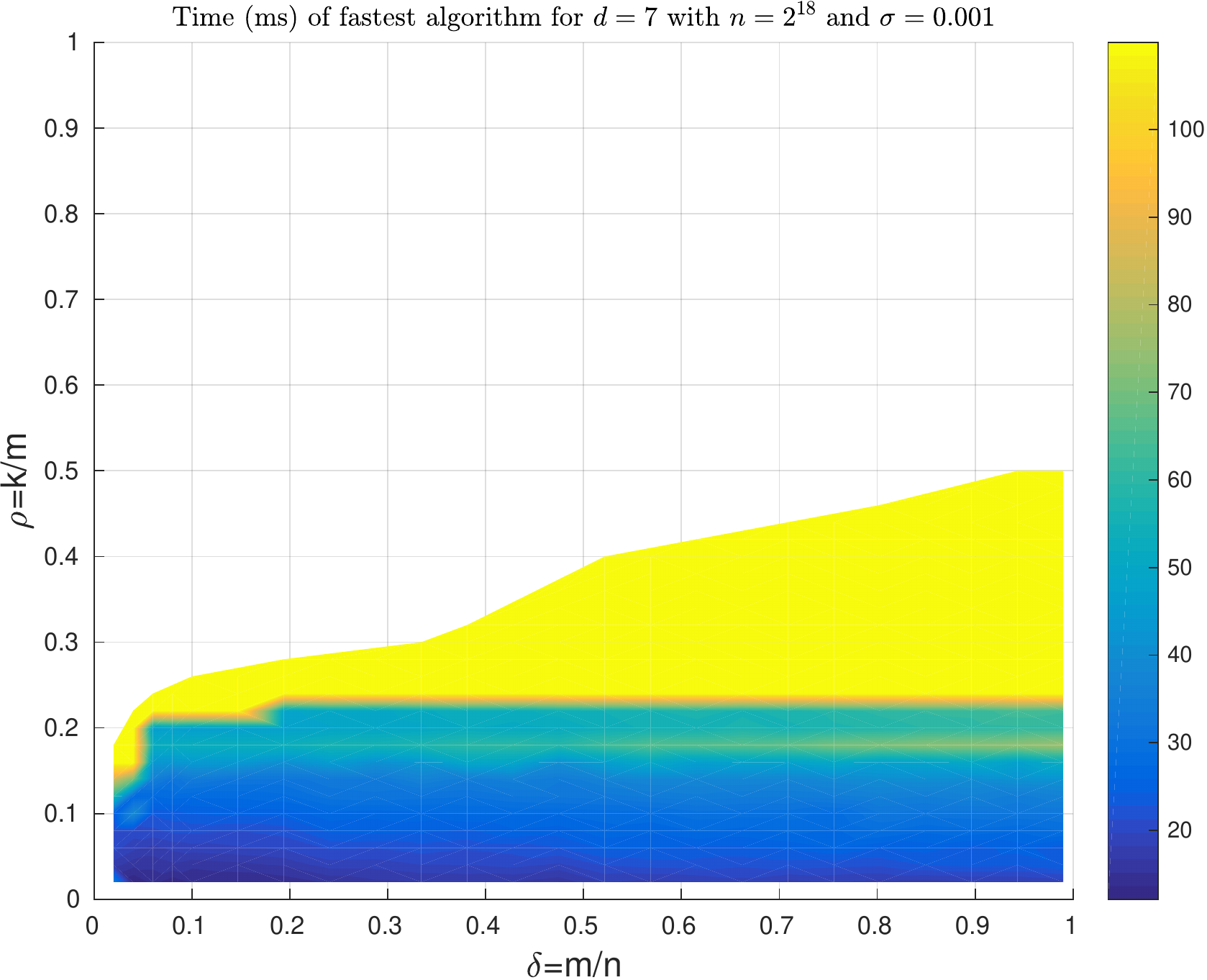} \label{fig:timing_001}}
  \subfloat[Timing ratio: Robust-$\ell_0$, $\sigma = 0.001$]{\includegraphics[width=0.24\linewidth]{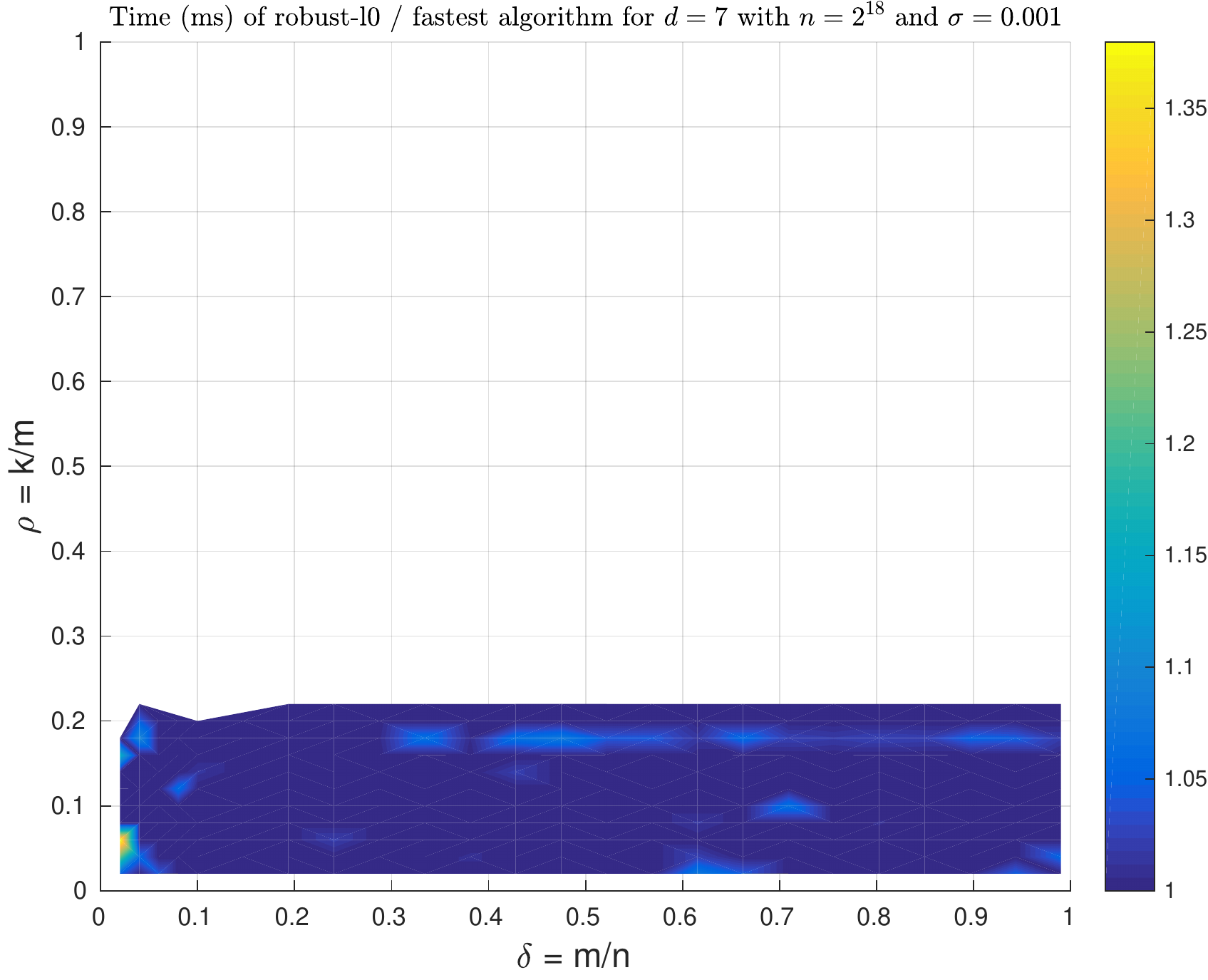} \label{fig:ratio_001}}\\

  \subfloat[Phase transition $\sigma = 0.01$]{\includegraphics[width=0.24\linewidth]{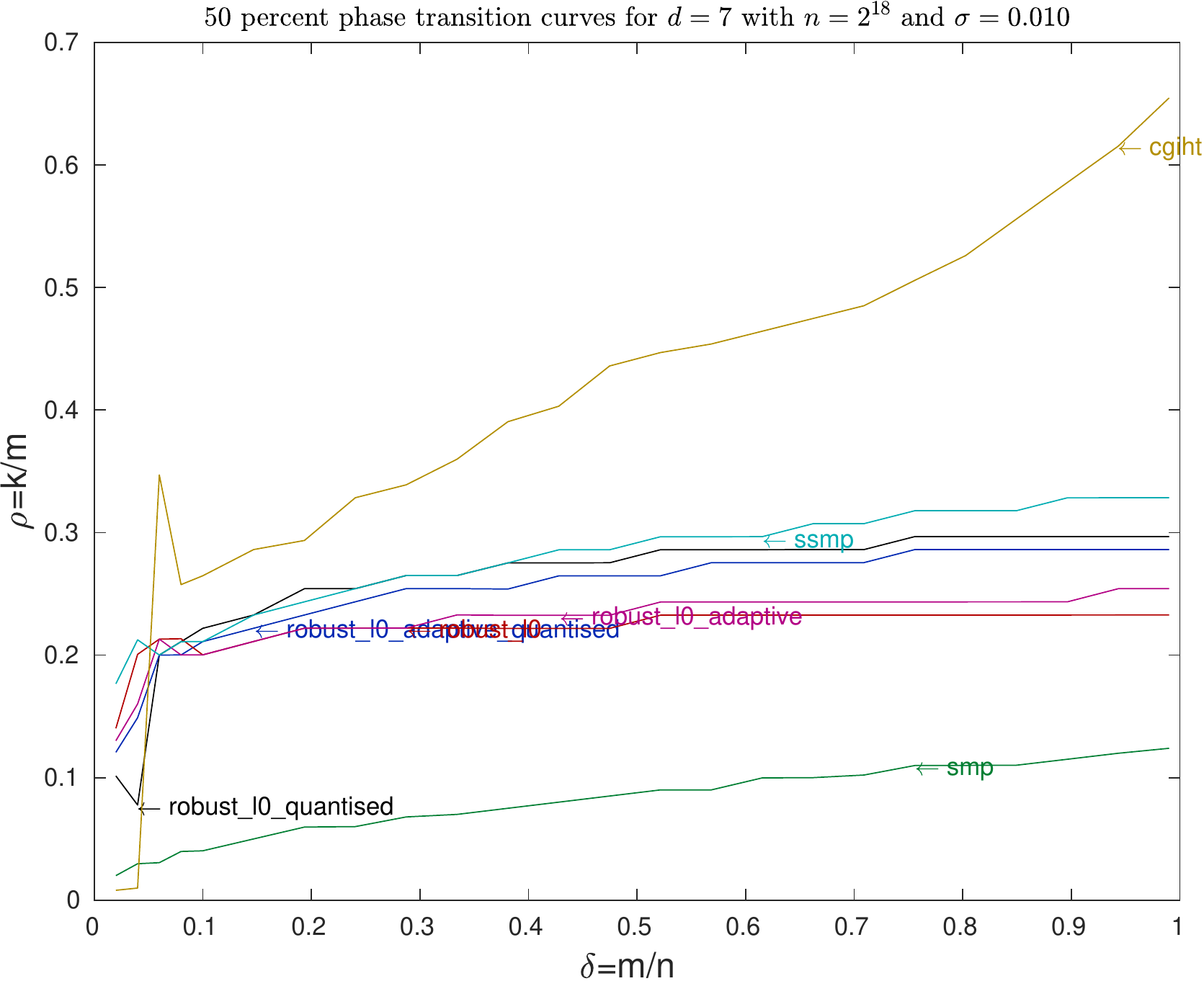} \label{fig:transition_010}}
  \subfloat[Selection map $\sigma = 0.01$]{\includegraphics[width=0.24\linewidth]{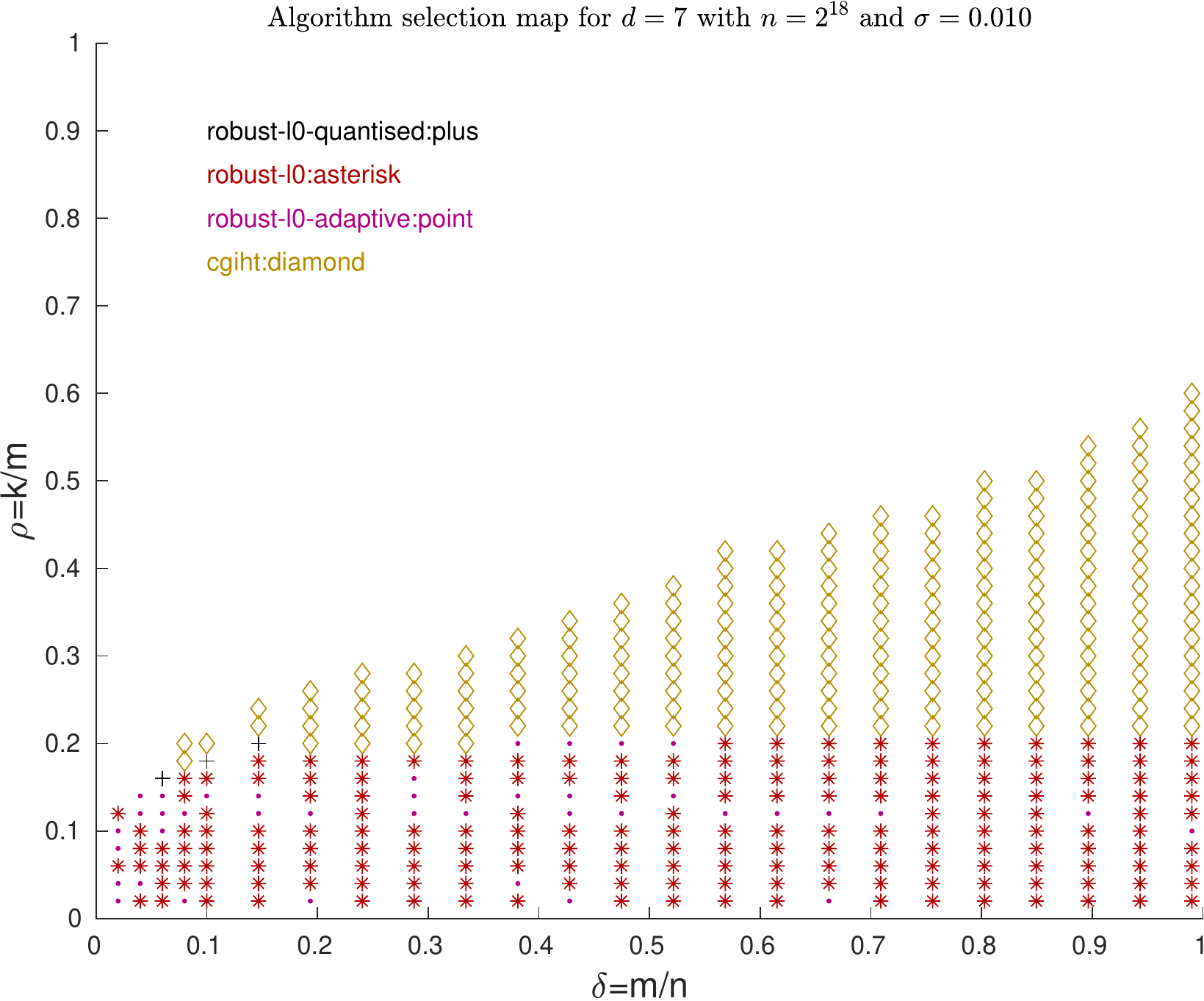} \label{fig:selection_010}}
  \subfloat[Best time $\sigma = 0.01$]{\includegraphics[width=0.24\linewidth]{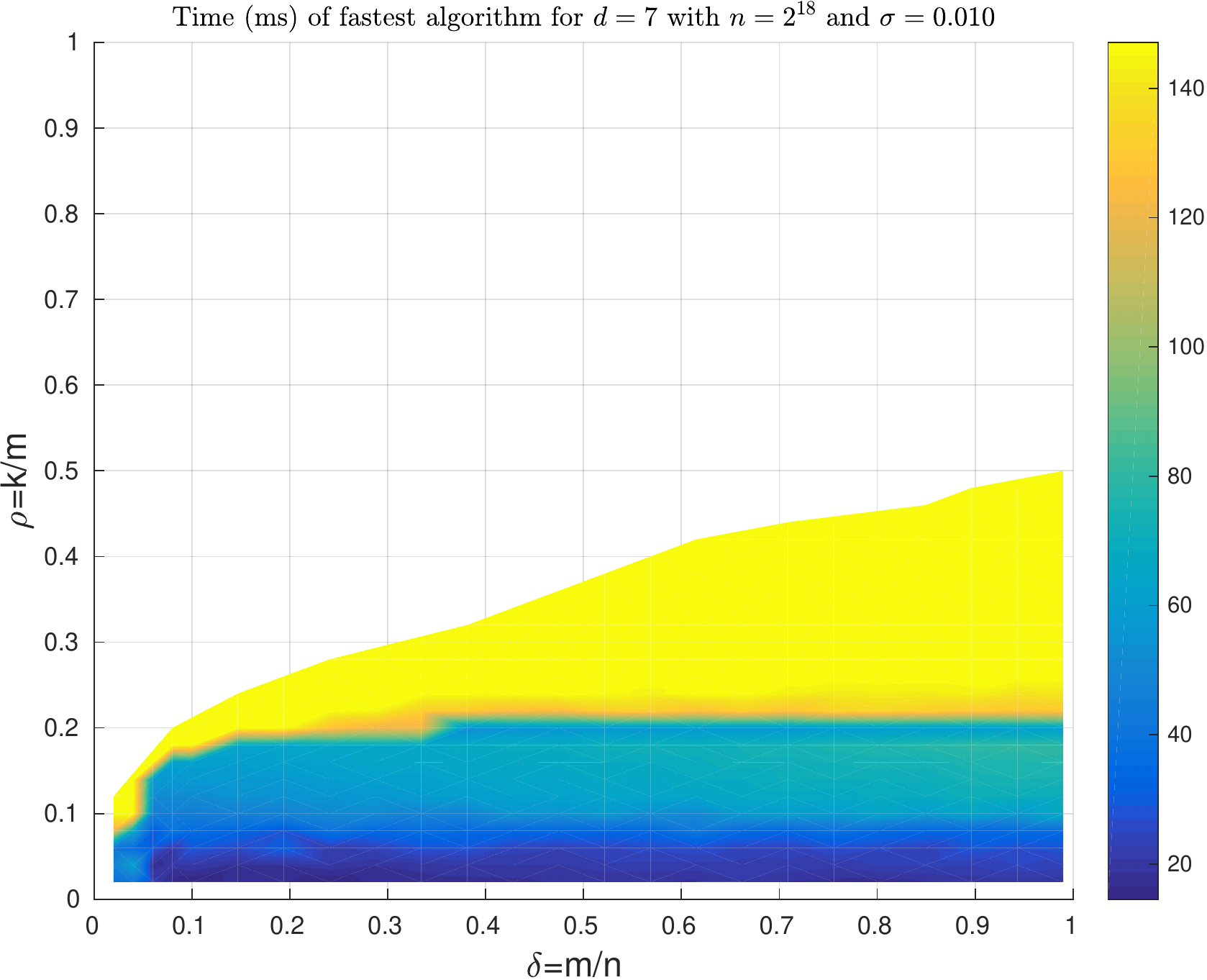} \label{fig:timing_010}}
  \subfloat[Timing ratio: Robust-$\ell_0$, $\sigma = 0.001$]{\includegraphics[width=0.24\linewidth]{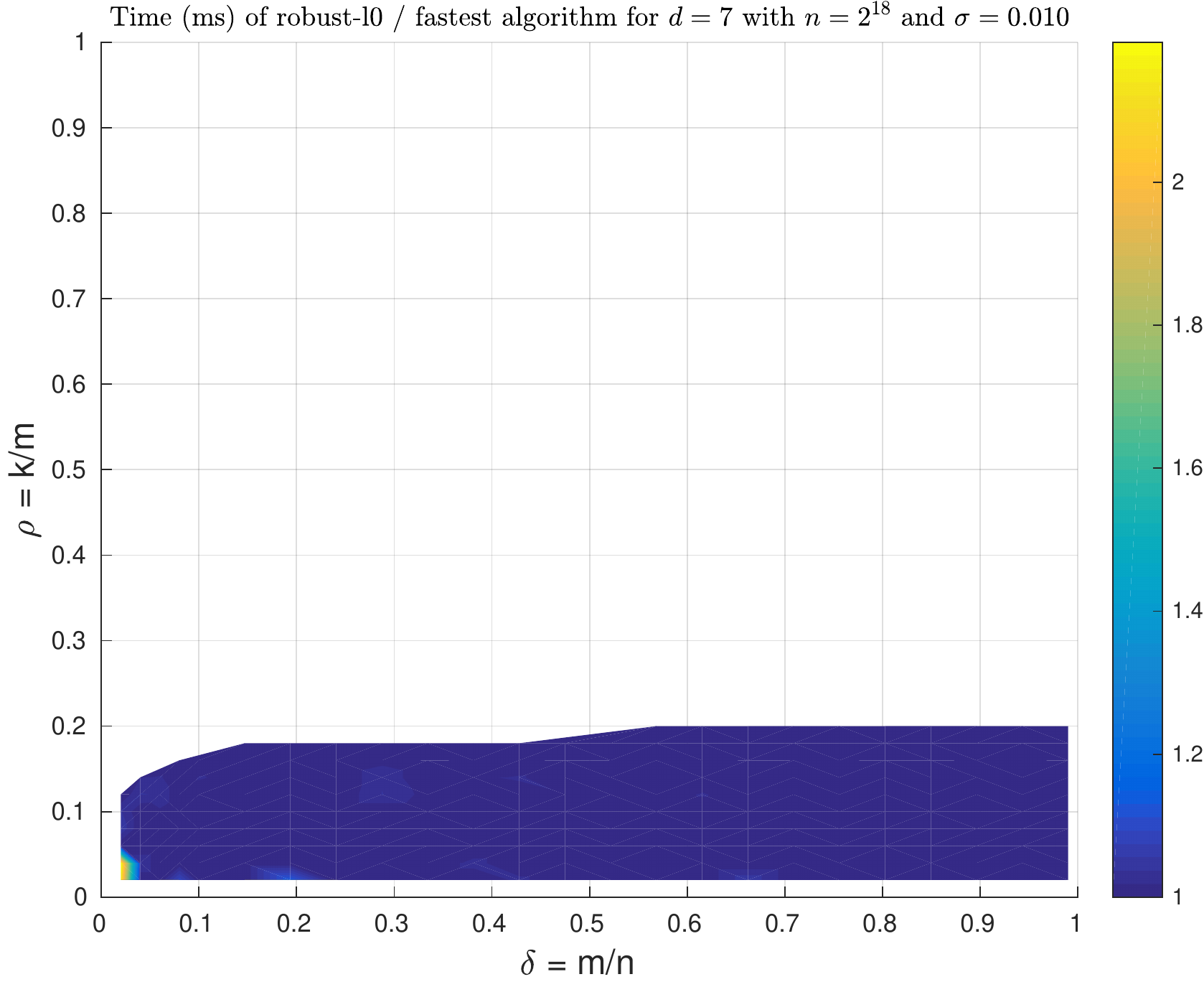} \label{fig:ratio_010}}\\

  \subfloat[Phase transition $\sigma = 0.1$]{\includegraphics[width=0.24\linewidth]{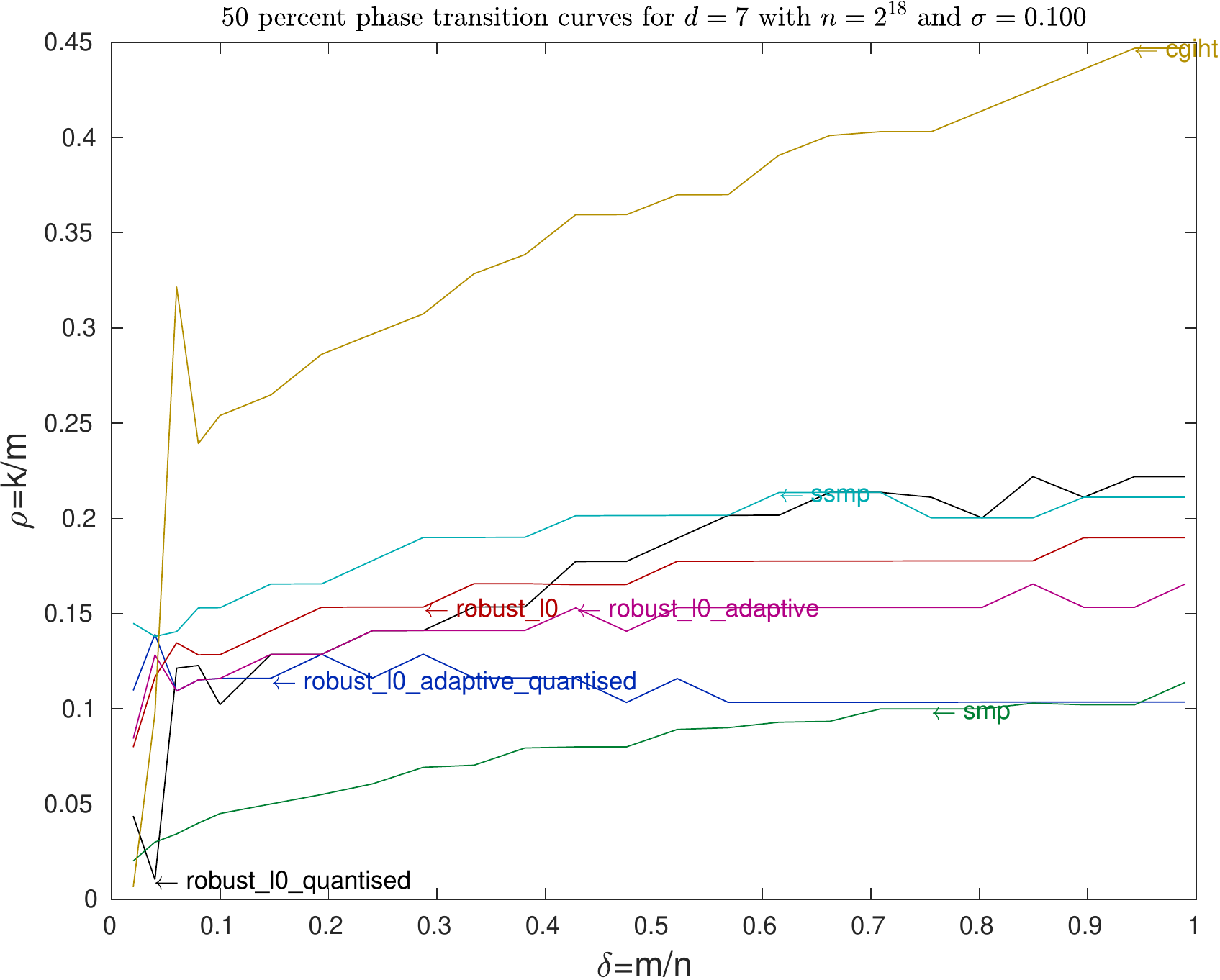} \label{fig:transition_100}}
  \subfloat[Selection map $\sigma = 0.1$]{\includegraphics[width=0.24\linewidth]{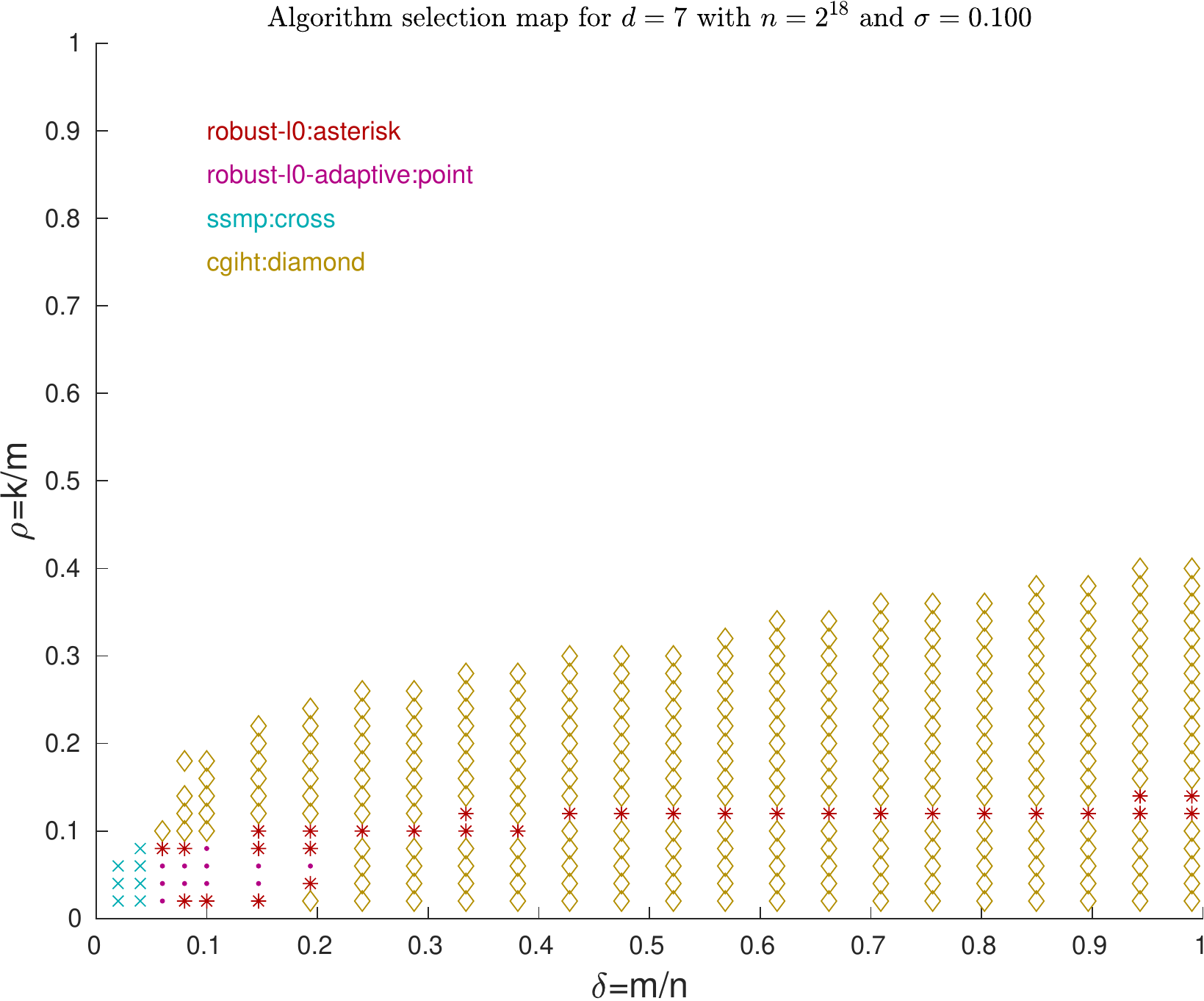} \label{fig:selection_100}}
  \subfloat[Best time $\sigma = 0.1$]{\includegraphics[width=0.24\linewidth]{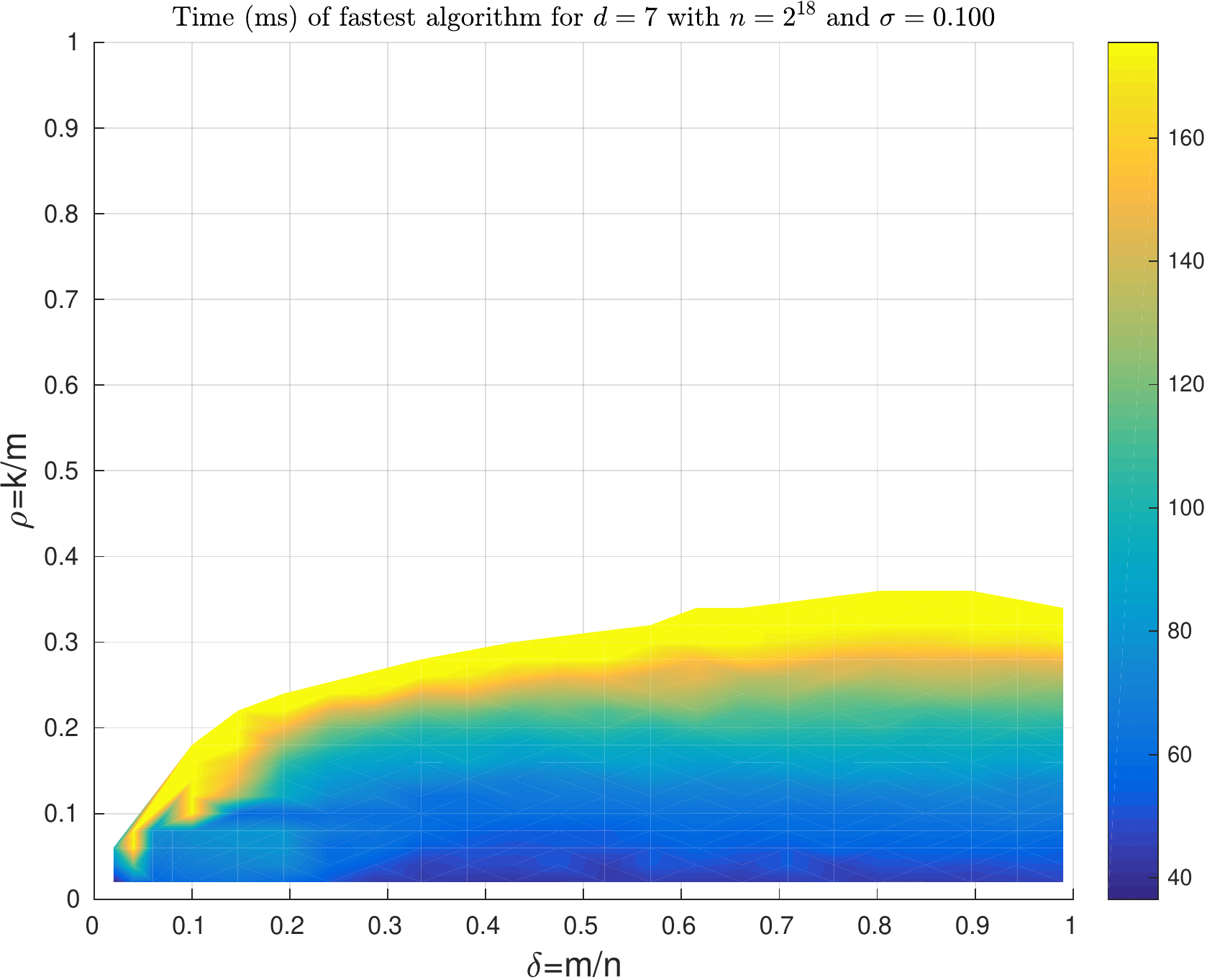} \label{fig:timing_100}}
  \subfloat[Timing ratio: Robust-$\ell_0$, $\sigma = 0.001$]{\includegraphics[width=0.24\linewidth]{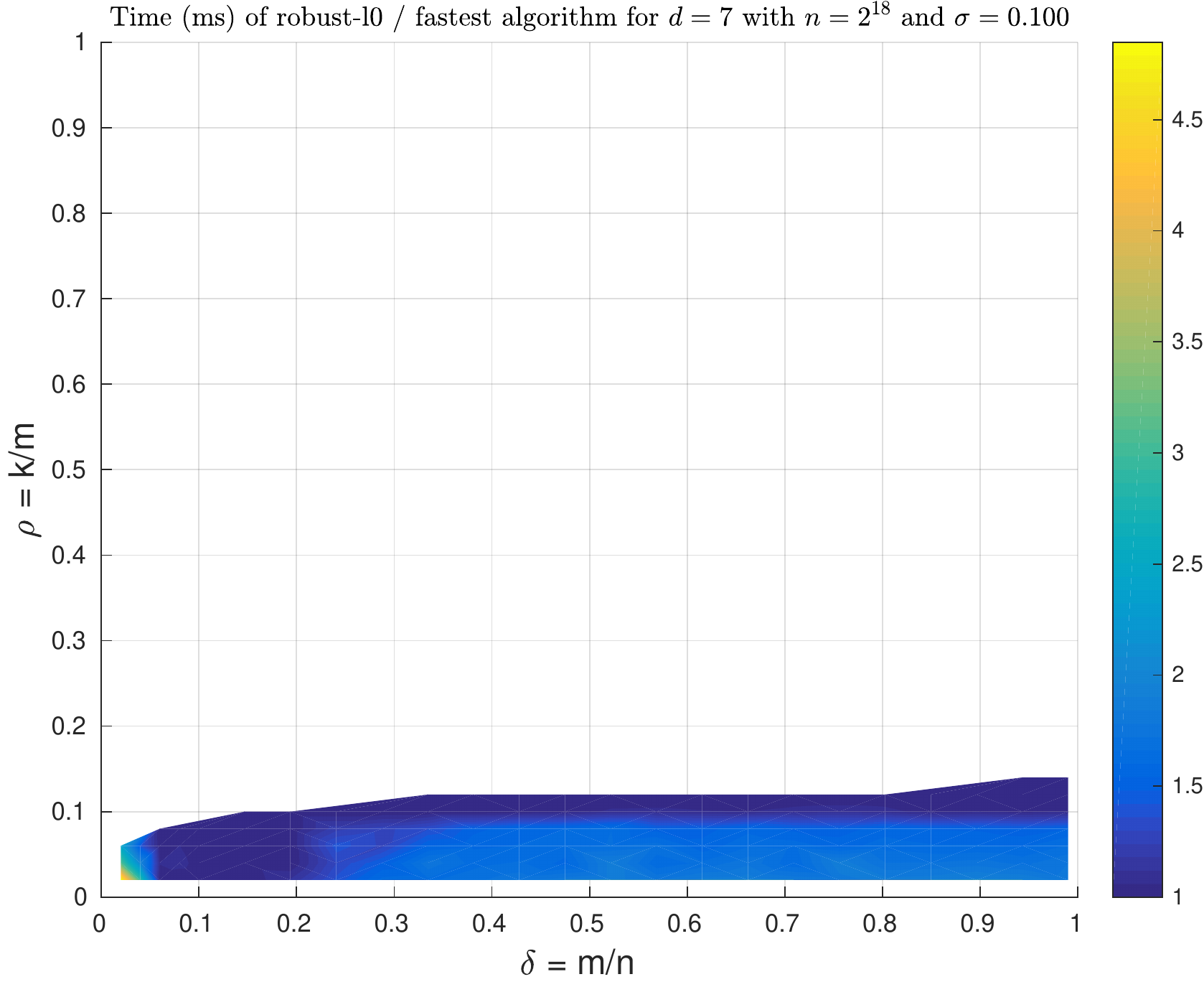} \label{fig:ratio_100}}
\caption{Phase transitions, selection maps and timings for $n = 2^{18}$.}
	\label{fig:selection_maps_best_time_n_2p18}
\end{figure*}

\subsection{Stopping conditions}\label{subsec:stop}

We are interested in the signal model $y = Ax + \eta$ where $\eta \sim \mathcal{N}(0, \sigma^2 I_{m \times m})$.
If $\hat x$ is an approximation to $x$ the residual is $r = y - A\hat x$.
Note that if $\hat x = x$, then
\begin{align*}
\|r\|_1 &= \|y - A\hat x\|_1 \\
& = \|y - Ax\|_1\\
& = \|\eta\|_1
\end{align*}
and we should not seek reductions in the residual below $\|\eta\|_1$
which would result in fitting to the additive noise.
We further account for the variance of $\|\eta\|_1$ and denote the algorithm to have successfully recovered $x$ if $\hat{x}$ satisfies
\begin{equation}
\label{eq:success_condition}
\frac{\|x - \hat x\|_1}{\|x\|_1} \leq \frac{\mathbb{E}\left[\|\eta\|_1\right] + C_1 \sqrt{\Var\left[\|\eta\|_1\right]}}{\|x\|_1}
\end{equation}
for some $C_1 \geq 0$.
We should be aware that the right hand side of \eqref{eq:success_condition} might be greater than 1 for some choices of $k$, $m$ and $\sigma$.
When this happens, the stopping condition \eqref{eq:success_condition} becomes invalid since we expect $\hat x$ to have captured a proportion of the $\ell_1$-energy of $\|x\|_1$.
Hence, if the right hand side of \eqref{eq:success_condition} is greater than $\frac{1}{10}$ we clip the upper bound at this value and use the stopping condition
\begin{equation}
\label{eq:success_condition_clip}
\frac{\|x - \hat x\|_1}{\|x\|_1} \leq \min\left(\frac{\mathbb{E}\left[\|\eta\|_1\right] + C_1 \sqrt{\Var\left[\|\eta\|_1\right]}}{\|x\|_1}, \frac{1}{10}\right).
\end{equation}
For the numerical experiments conducted in this section we consider nonzero entries in $x$ drawn as $x_i\sim \mathcal{N}(0,1)$, and noise $\eta_i \sim \mathcal{N}(0, \sigma^2)$ for which $\mathbb{E}\left[\|x\|_1 \right] = k\sqrt{\frac{2}{\pi}}$ and $\mathbb{E}\left[\|\eta\|_1 \right] = m\sigma \sqrt{\frac{2}{\pi}}$ and moreover $\Var\left[\|\eta\|_1 \right] = m\sigma^2\left(1 - \frac{2}{\pi}\right)$, see e.g. \cite{leone1961folded}.
%
%

%

\subsection{Selection of parameter $c$ in Algorithm \ref{alg:robust-l0}}\label{subsec:c}

The sweeping parameter $t$ in Algorithm \ref{alg:robust-l0} is initialised at 1 and updated by decreasing it by a constant value $c$.
We observed in our experiments that the quality of the phase transitions are sensitive on the parameter $c$ especially for low $\delta$ and $\rho$.
We do not provide a way to fine-tune $c$, but we run our phase transitions with the following choices:
\begin{enumerate}
\item If the algorithm is {\tt quantised},
\begin{equation}
\label{eq:finetune_c_quant}
c = \left\{\begin{array}{ll}
0.01 & \delta \leq 0.05\\
0.05 & \delta > 0.05  \mbox{ and } \rho \in (0, 0.1]\\
0.075 & \delta > 0.05  \mbox{ and } \rho \in (0.1, 0.2]\\
0.1 & \delta > 0.05 \mbox{ and } \rho \in (0.2, 1)
\end{array}\right..
\end{equation}
\item If the algorithm is {\tt continuous},
\begin{equation}
\label{eq:finetune_c_cont}
c = \left\{\begin{array}{ll}
0.01 & \delta \leq 0.05\\
0.025 & \delta > 0.05
\end{array}\right..
\end{equation}
\end{enumerate}
The values in \eqref{eq:finetune_c_quant} and
\eqref{eq:finetune_c_cont} were chosen heuristically for $\nu$ and
$\mu$ Gaussian.

\subsection{Phase transitions and runtime}\label{subsec:phase}
We benchmark the variants of Robust-$\ell_0$ against other greedy
algorithms via their {\em phase-transitions and runtime}.
The user can supply two binary flags, {\tt adaptive\_k} and {\tt
  quantised} which yield four different variants of Robust-$\ell_0$.
We assigned a unique label to each of these variants as described in Table \ref{table:variants}.

\begin{table}
\begin{tabular}{ l || c | c }
Algorithm & {\tt \footnotesize adaptive\_k}& {\tt \footnotesize quantised}\\
\hline
\hline
  Robust-$\ell_0$                 & No & No \\
  Robust-$\ell_0$-adaptive        & Yes & No \\
  Robust-$\ell_0$-quantised           & No & Yes \\
  Robust-$\ell_0$-adaptive-quantised  & Yes & Yes \\
\end{tabular}
\caption[Variants of Robust-$\ell_0$]{Variants of Robust-$\ell_0$}
\label{table:variants}
\end{table}
The phase transition of a compressed-sensing algorithm \cite{donoho2010precise} is the largest value of $k/m$ for which the algorithm is typically able recovery all $k$ sparse vectors with sparsity less than $k$ for a fixed $m/n$.
Hence, for a fixed value of $\delta = m/n$ the phase transition of an algorithm is the largest value $\rho^*(\delta)$ for which the algorithm converges for all $\rho(\delta) < \rho^*(\delta)$.
The value $\rho^* (m/n)$ often converges to a fixed value as $n \rightarrow \infty$, so phase transitions often partition the $\delta\times \rho$ space into two regions: One in which the algorithm converges with high probability and another in which the algorithm doesn't converge with high probability.
We benchmark Robust-$\ell_0$ against the algorithms presented in \cite{Berinde:2008aa, Berinde:2009aa, Blanchard:2015aa}. Specifically, our tests include the following algorithms,
\begin{center}
\{{\small Robust-$\ell_0$, Robust-$\ell_0$-adaptive, Robust-$\ell_0$-quantised, Robust-$\ell_0$-adaptive-quantised, SSMP, SMP, CGIHT}\}.
\end{center}
In the deterministic case Parallel-$\ell_0$ was compared against a range of combinatorial compressed sensing algorithms in \cite{Tanner:2015aa}; out of those we have selected SSMP and SMP as these perform best and are similar in nature to Robust-$\ell_0$.
We also compare with CGIHT, as this algorithm was shown to be the fastest among the greedy algorithms compared in \cite{Blanchard:2015aaCGIHT, Blanchard:2015aa}.
%
Figures \ref{fig:transition_001}, \ref{fig:transition_010} and
\ref{fig:transition_100} show the phase transition curves for these
algorithms with $\sigma=10^{-3}, \; 10^{-2},\; 10^{-1}$ respectively.
The curves were computed by setting $n = 2^{18}$, $d= 7$, and using
the stopping condition $\|r\|_1\le\mathbb{E}\left[\|\eta\|_1 \right] = m\sigma \sqrt{\frac{2}{\pi}}$
and a success condition \eqref{eq:success_condition} with $C_1 =
1$. 
The testing is done at $m=\delta_p n$ for 
\begin{equation*}
\delta_p \in \{ 0.02p :p \in [4]\} \cup \left\{0.1 + \frac{89}{1900}(p - 1) : p \in [20]\right\}.
\end{equation*}
For each $\delta_p$, we set $\rho = 0.01$ and generate 10 synthetic problems to apply the algorithms to, with a problem generated as in $\expandermodel$ with the given parameters and $\mu$ and $\nu$ being normal Gaussian.
If at least one such problem was recovered successfully, the sparsity ratio $\rho$ is increased by $0.01$ and the experiment is repeated.
Following the testing framework in \cite{blanchard2013performance},
the recovery data is fitted using a logistic function and finally the
50\% recovery transition function is computed and presented in the
phase transition plots contained herein. 
Figures \ref{fig:selection_001}, \ref{fig:selection_010} and \ref{fig:selection_100} show a selection map for these algorithms.
Namely, these plots indicate which algorithm requires the least
computational time\footnote{All the numerical results presented here were performed using a Linux machine with Intel Xeon E5-2643 CPUs \@ 3.30 GHz, NVIDIA Tesla K10 GPUs and executed from Matlab R2016b.
The code was added to the GAGA library available at
\texttt{\url{http://www.gaga4cs.org/}}, and described in
\cite{blanchard2012gpu}, so as to facilitate large scale benchmarking
against the other algorithms presented here which are also contained
in the aforementioned library.}
 at each point in the $\delta\times \rho$ space where the algorithm converges.
Finally, Figures \ref{fig:timing_001}, \ref{fig:timing_010} and
\ref{fig:timing_100} show the total time for convergence in
milliseconds for the fastest algorithm at each point in the $\delta
\times \rho$ space.  
The ratio of the time for the second fastest algorithm over the time
for the fastest algorithm is given in Figures \ref{fig:ratio_001},
\ref{fig:ratio_010}, and \ref{fig:ratio_100}.

We can see from Figure \ref{fig:selection_maps_best_time_n_2p18} that CGIHT \cite{Blanchard:2015aa} dominates the upper region of the phase transition space, while the Robust-$\ell_0$ algorithms only converge for $\rho \lessapprox 0.3$ which is consistent with the observed phase transitions for Parallel-$\ell_0$ \cite{Tanner:2015aa}.
In terms of speed, Robust-$\ell_0$ seems to be the most competitive
for $\sigma \in \{10^{-3}, 10^{-2}\}$ and $\rho \lessapprox
0.2$. However, for large noise, $\sigma = 10^{-1}$, CGIHT becomes the fastest algorithm of all.
We remark that while CGIHT performs very well in our numerical tests, the current theory developed for it does not hold in the setting considered here, as it requires zero-mean columns in $A$.

Figure \ref{fig:pt_widths} shows the widths for the Robust-$\ell_0$ algorithms.
The widths measure how sharp the phase transition of an algorithm is; namely, how thin the boundary between the region of recovery with high-probability and the region of recovery where combinatorial search is needed.
It has been shown that the widths of a compressed sensing algorithm
tend to zero as $n \rightarrow \infty$ when decoding with linear
programming \cite{Donoho:2010aa}, and we usually expect the same
behaviour for other algorithms \cite{Blanchard:2015aa}.
Figure \ref{fig:pt_widths} show that the widths for the Robust-$\ell_0$ algorithms indeed decrease with $n$ and with $\delta$.
The observed smoothness of the phase transition widths signal also suggest that the stopping conditions of the algorithm are consistent for the problem under consideration.
\begin{figure}[!htbp]
	\centering
	\subfloat[robust-$\ell_0$-adaptive]{\includegraphics[width=0.49\linewidth]{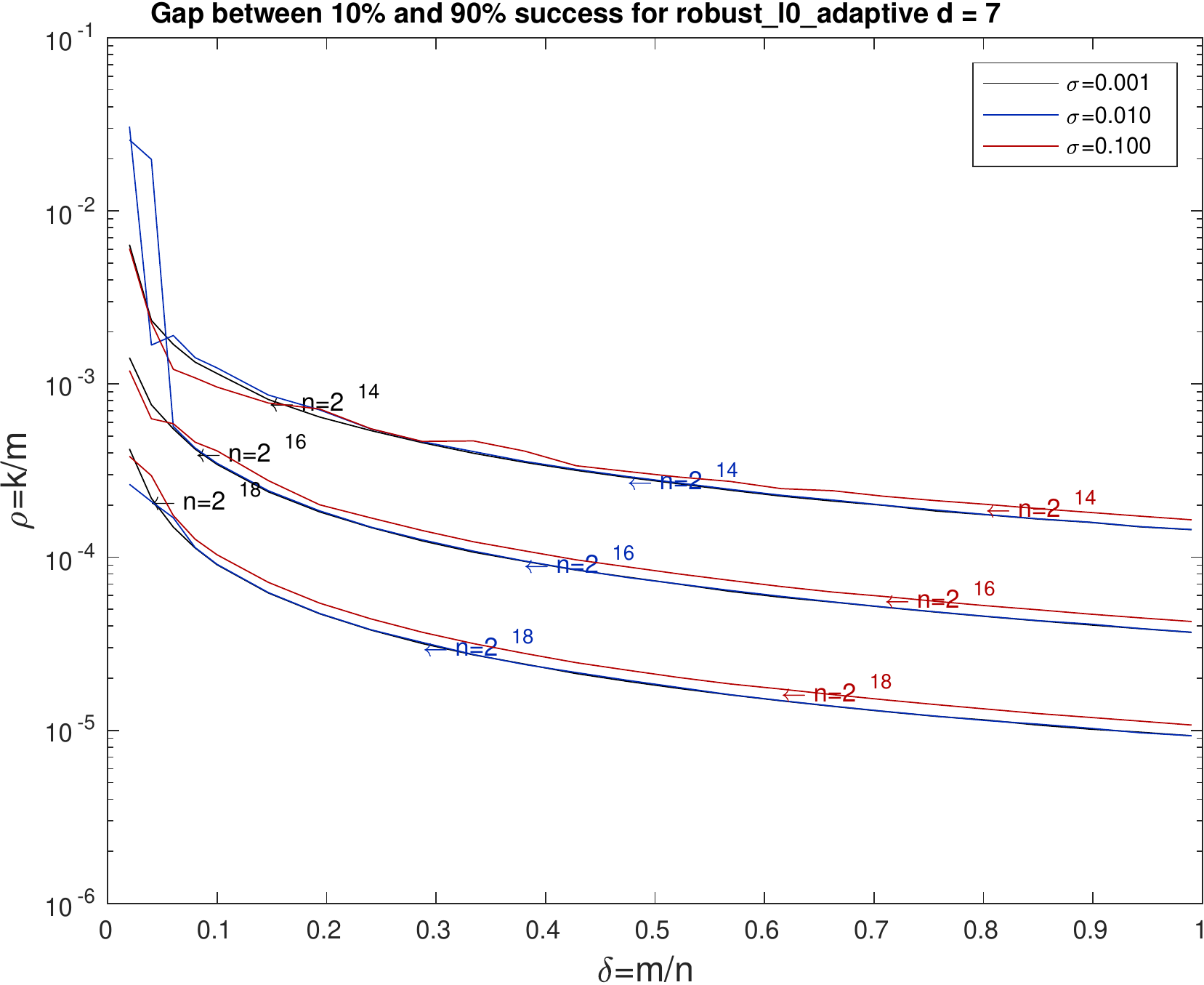}}
	\subfloat[robust-$\ell_0$-adaptive-quantised]{\includegraphics[width=0.49\linewidth]{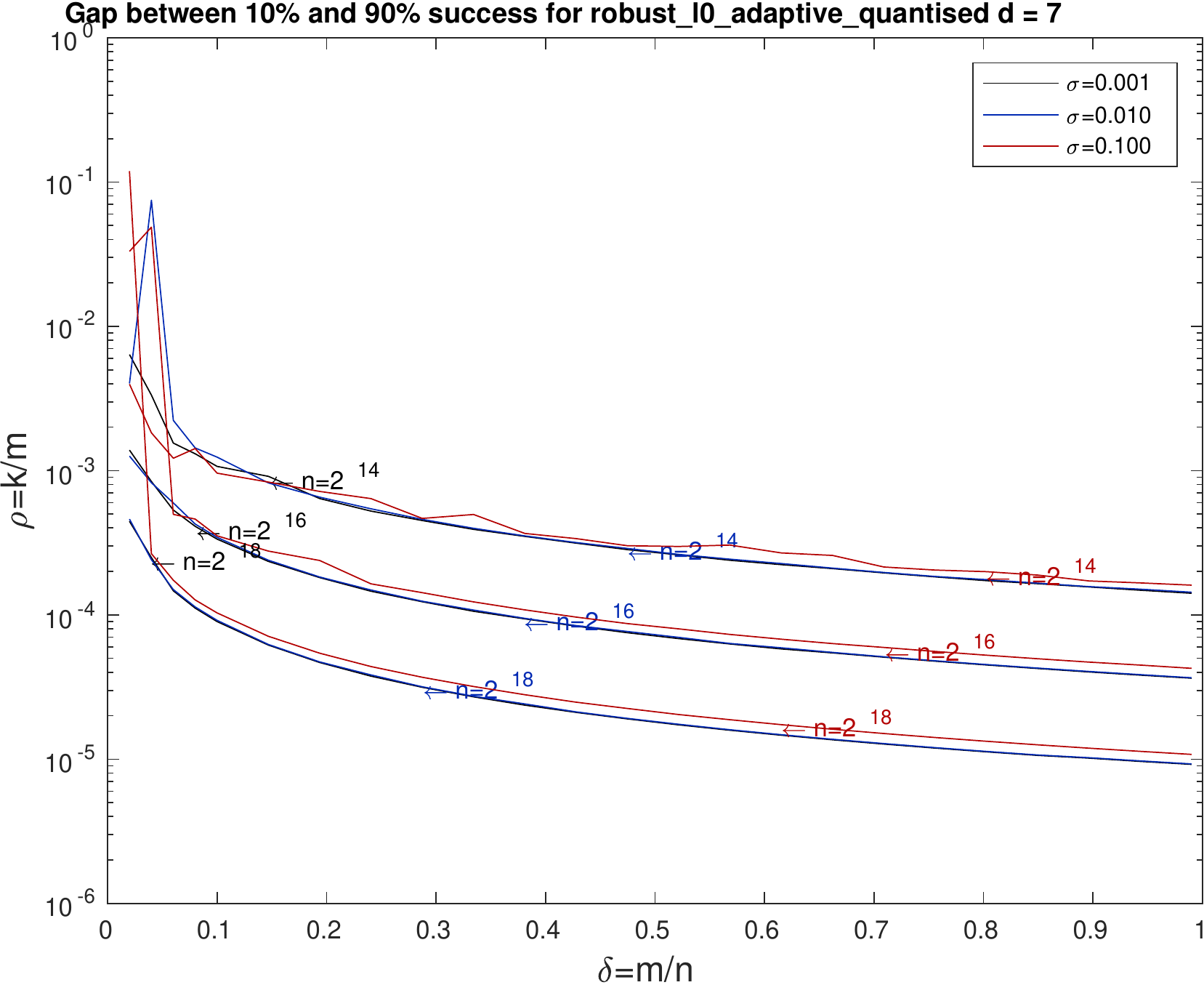}}\\
	\subfloat[robust-$\ell_0$]{\includegraphics[width=0.49\linewidth]{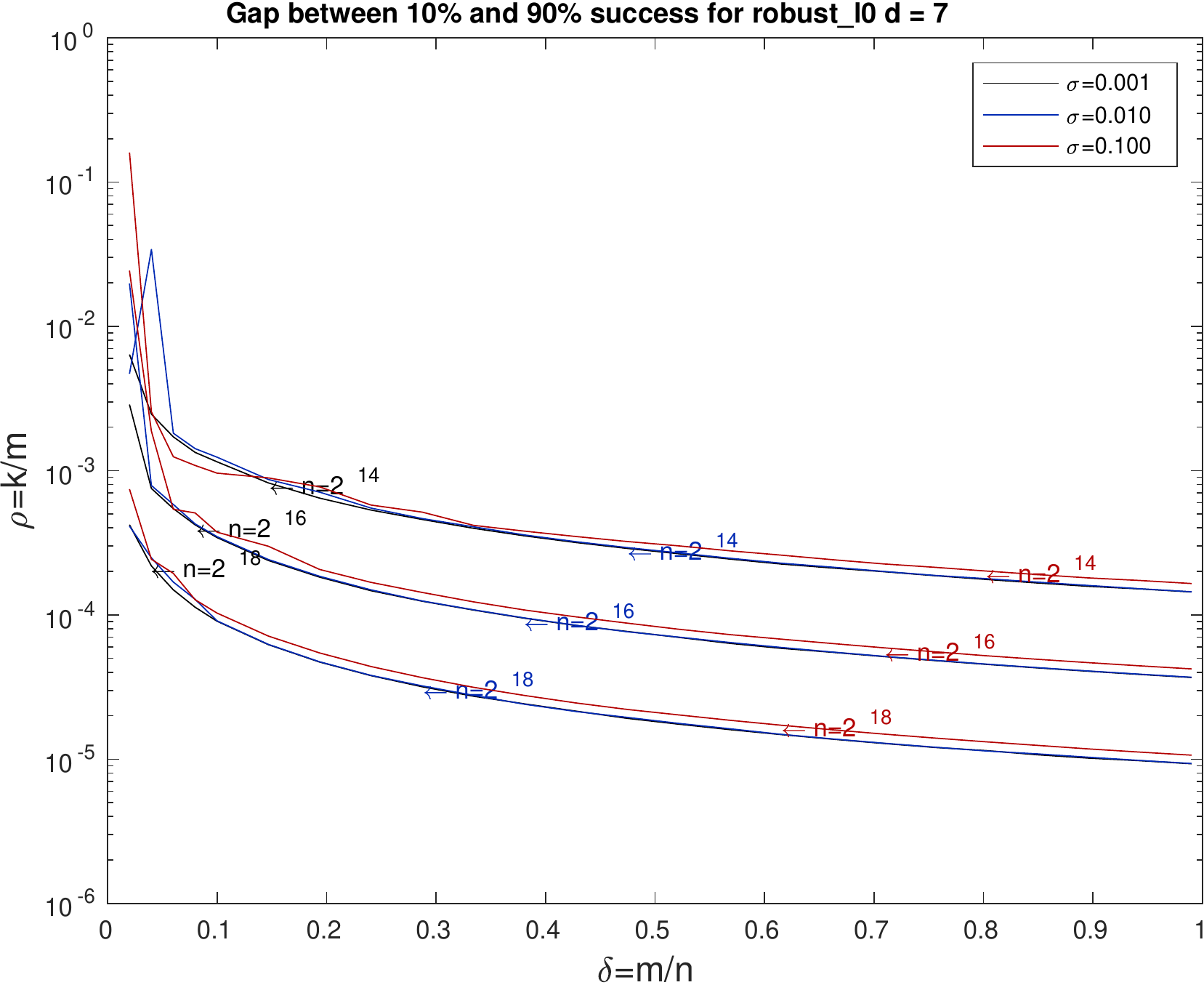}}
	\subfloat[robust-$\ell_0$-quantised]{\includegraphics[width=0.49\linewidth]{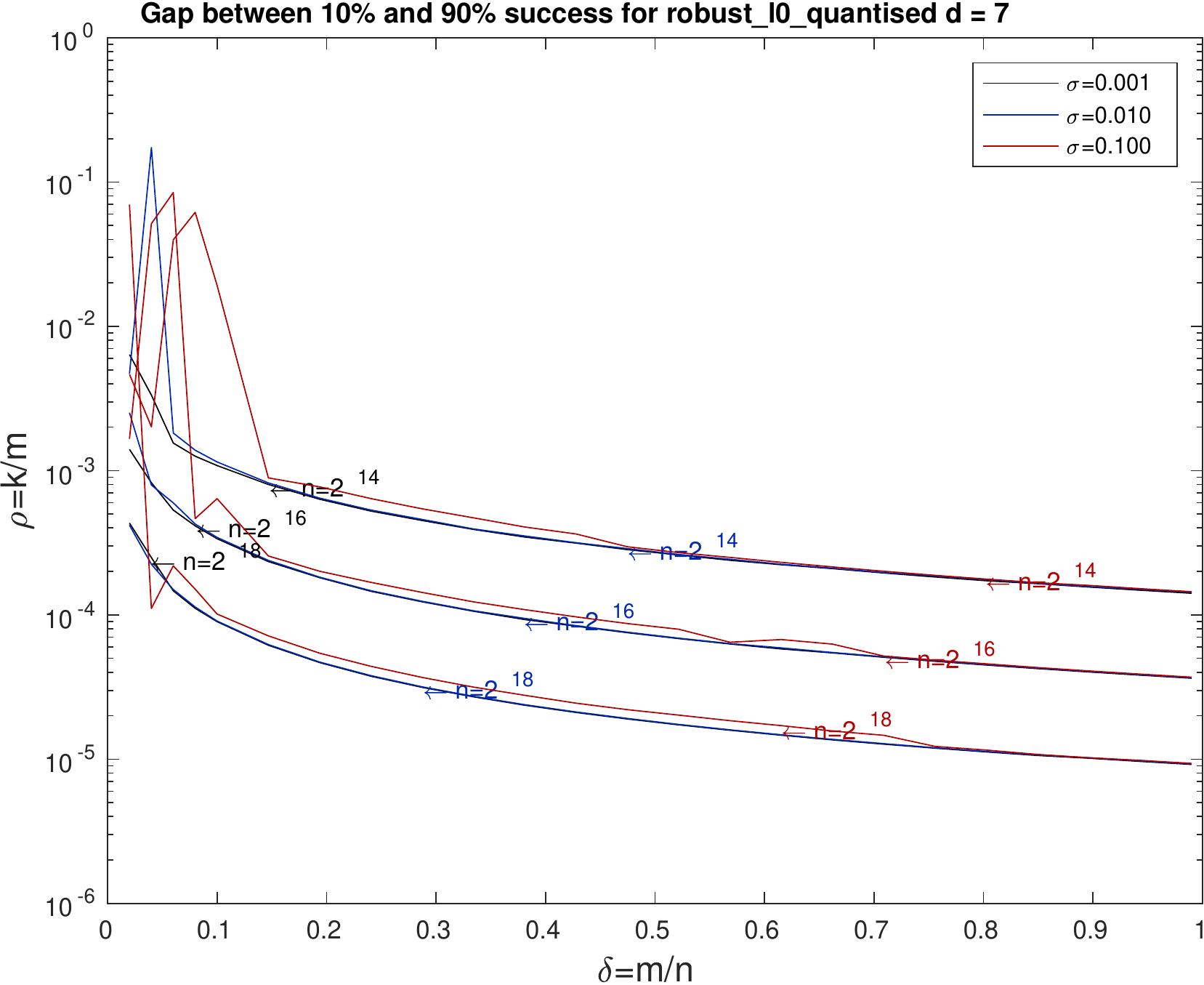}}
	\caption[Phase transition widths for Robust-$\ell_0$ variants.]{Widths for Robust-$\ell_0$ variants}
	\label{fig:pt_widths}
\end{figure}

\subsection{Dependence on noise variance, $\sigma$}\label{subsec:sigma}
\begin{figure}[!htbp]
	\centering
	\subfloat[$\delta=0.01$]{\includegraphics[width=0.49\linewidth]{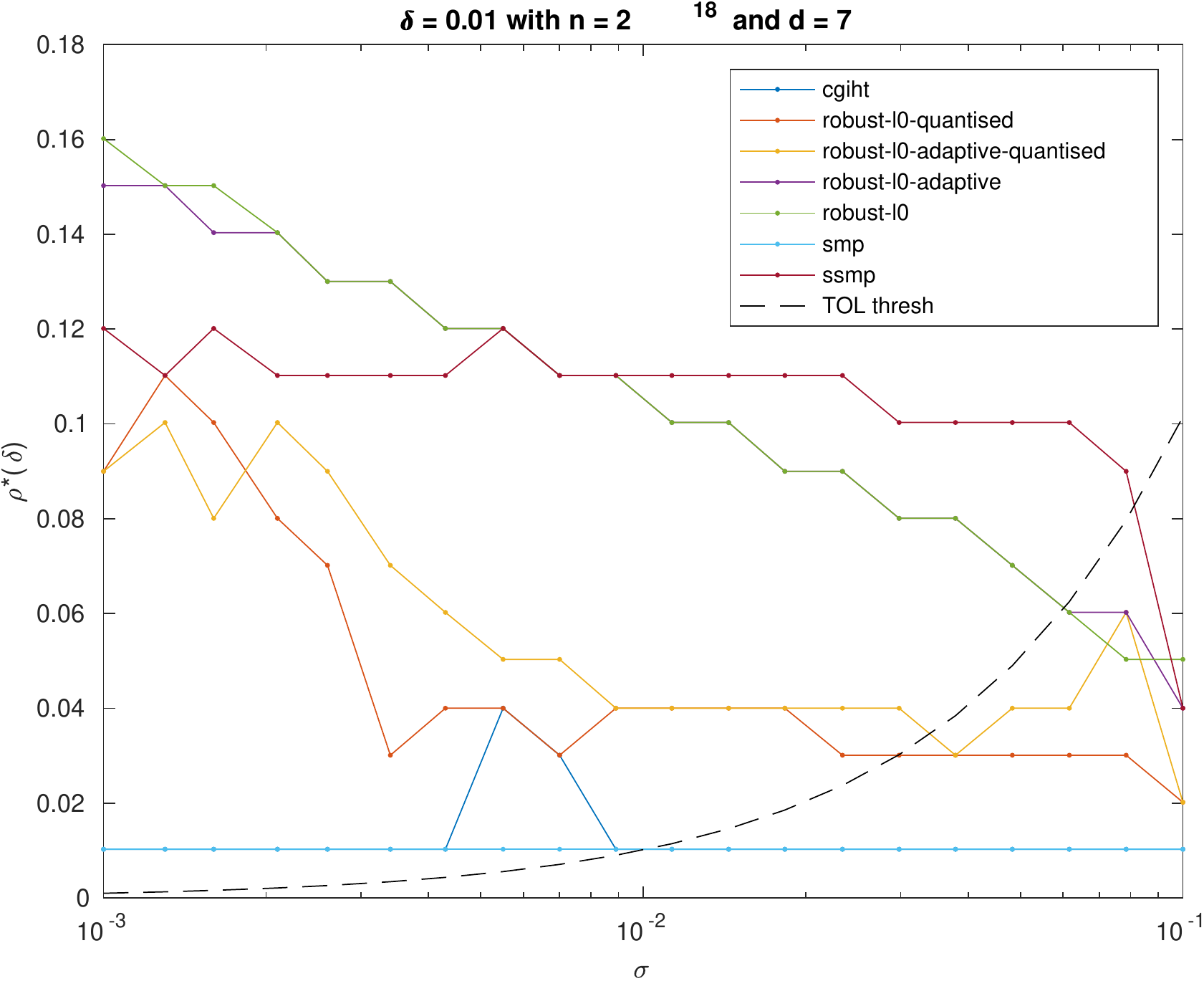}}
	\subfloat[$\delta=0.02$]{\includegraphics[width=0.49\linewidth]{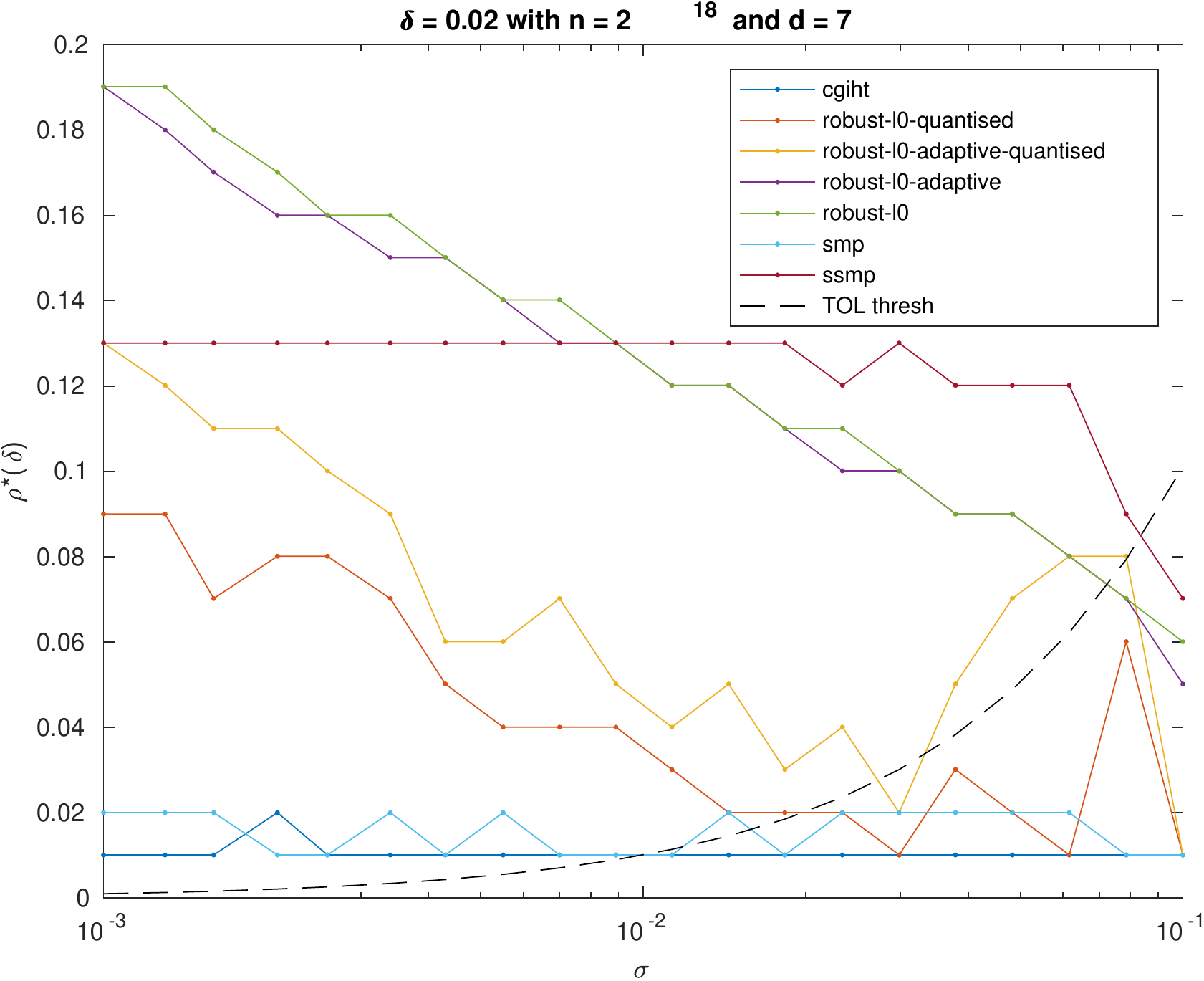}}\\
	\subfloat[$\delta=0.10$]{\includegraphics[width=0.49\linewidth]{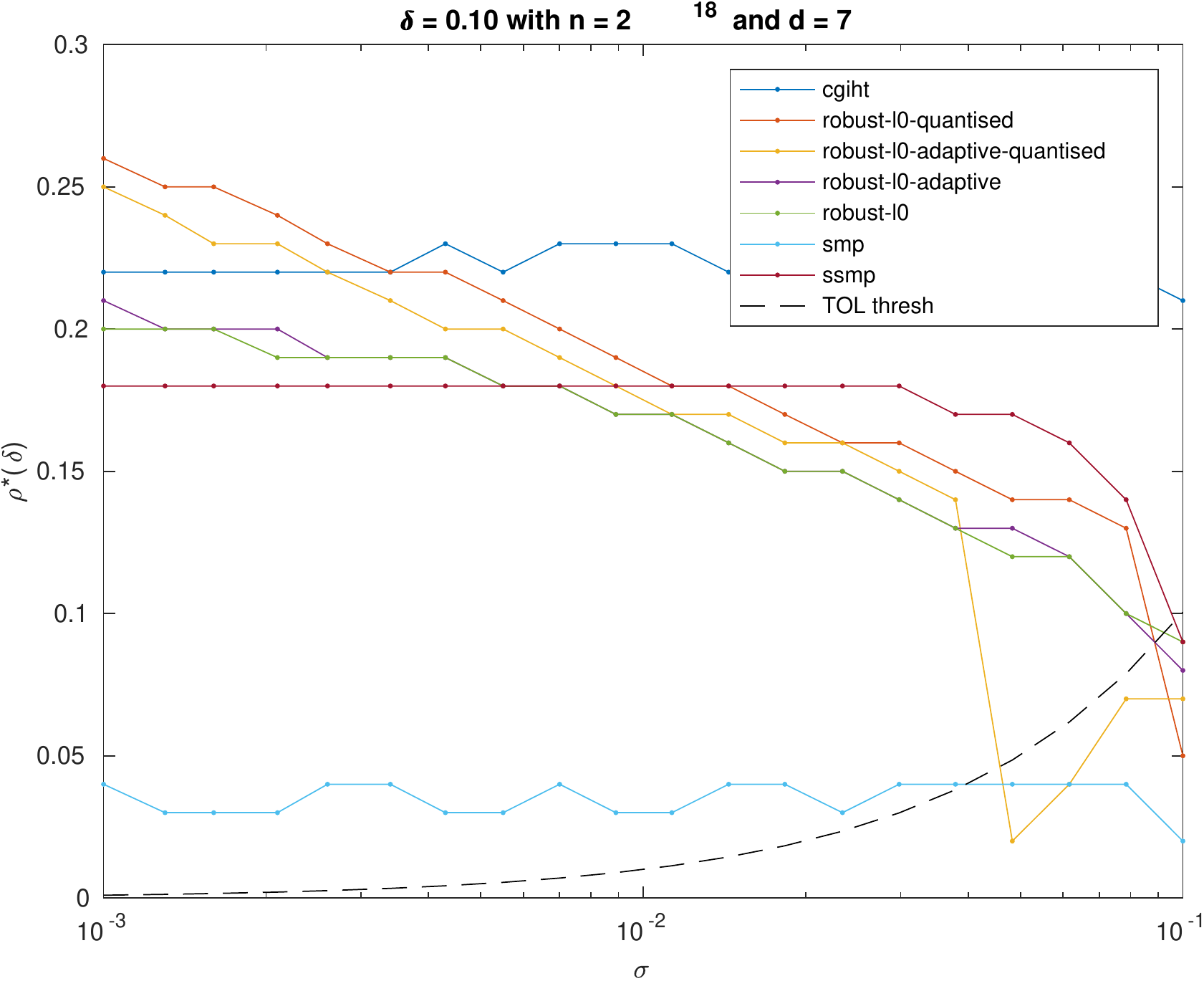}}
	\subfloat[$\delta=0.20$]{\includegraphics[width=0.49\linewidth]{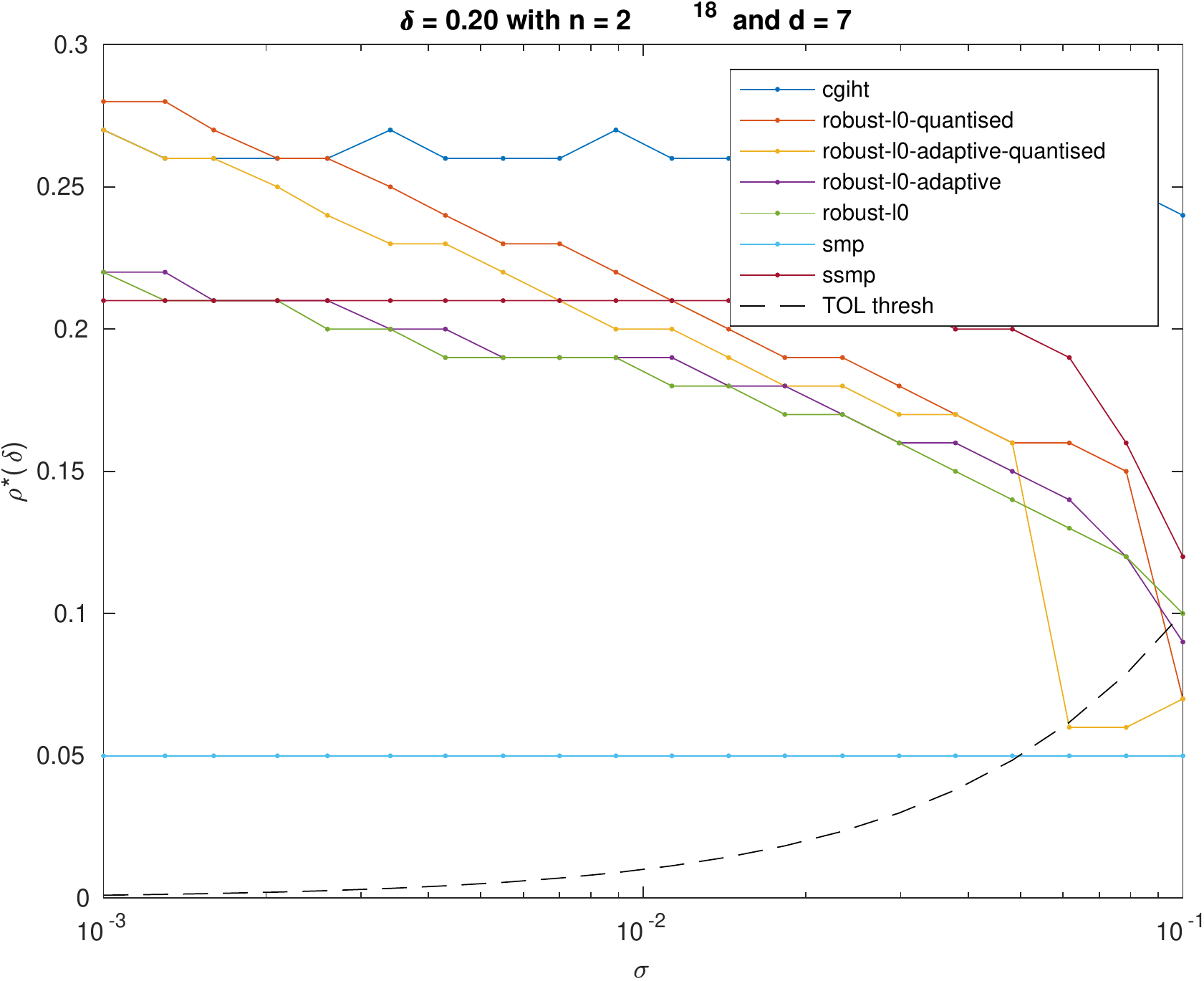}}
	\caption[Decrease in phase transition for varying $\sigma$.]{Decrease in phase transition for varying $\sigma$.}
	\label{fig:sigma_vs_rho_probs}
\end{figure}
We now investigate the extent to which the phase transitions of the algorithm decrease as we increase the noise level $\sigma$.
To do this, we consider $\delta \in \{0.01, 0.02, 0.1, 0.2\}$ and for each value of $\delta$ we compute we define the grid 
\[
\sigma \in \{10^{-3 + \frac{i}{10}} : i \in \{0\} \cup [20]\}.
\]
Then at each value of $\sigma$ we let $\rho = 0.01$ and draw ten problem instances from $\expandermodel$ with signal distribution $\mathcal{N}(0,1)$ and noise distribution $\mathcal{N}(0, \sigma^2)$.
If at least one of the problems was recovered successfully, then we set $\rho \leftarrow \rho + 0.01$ and repeat the experiment.
We do this procedure for each of the algorithms considered and record the largest $\rho$ having at least $50\%$ success rate.
The results are shown in Figure \ref{fig:sigma_vs_rho_probs}.
In order to show where the clipping in \eqref{eq:success_condition_clip} becomes active, the figures also show a {\tt TOL}-curve which partitions the space
into the region where the right hand side of
\eqref{eq:success_condition_clip} equals $\frac{1}{10}$ (bottom
region) and the region where it equals the right hand side of
\eqref{eq:success_condition} (top region).
We can appreciate from Figure \ref{fig:sigma_vs_rho_probs} that for small $\delta$ both Robust-$\ell_0$ and SSMP have the best recovery capabilities, with Robust-$\ell_0$ being preferable for noise levels $\sigma \lessapprox 10^{-2}$ and SSMP being better suited for larger noise levels.
For larger $\delta$, CGIHT is preferable except for very low noise levels.


\subsection{Phase transitions for extreme subsampling, $\delta\ll 1$}\label{subsec:delta}
The numerical experiments of Parallel-$\ell_0$ in \cite{Tanner:2015aa}
showed flat phase transitions; that is, it was observed that
$\rho^*(\delta)$ remained approximately $0.3$ as $\delta\rightarrow 0$
provided $n$ was sufficiently large.
While Robust-$\ell_0$ does not exhibit precisely the same behaviour in
the presence of noise, we do observe that $\rho^*(\delta)$ remains
nontrivial even for $\delta$ as small as $10^{-3}$, again provided $n$
is sufficiently large.
We provide numerical evidence for this in Figure \ref{fig:timing_low_delta}.
For each $(\delta, \sigma) \in \{0.001, 0.01\} \times \{0.001, 0.01\}$ we let $\rho = 0.01$ and solve ten problems drawn from $\expandermodel$ with nonzero distribution $\mathcal{N}(0, 1)$ and noise distribution $\mathcal{N}(0, \sigma^2)$. 
If at least one problem instance converges, we average the run-time of the problems that converged and repeat the process with $\rho \leftarrow \rho + 0.01$.
We plot the timing at each $\rho$ for parameter values for which at which at least $50\%$ of the problems were successfully recovered under the criteria \eqref{eq:success_condition_clip}.

It can be seen in Figure \ref{fig:tfd_a}-\ref{fig:tfd_d} that 
the phase transition either remains nearly unchanged or increases as
$n$ increases from $2^{22}$ to $2^{24}$.
%
%
In particular, Figure \ref{fig:tfd_a} shows that for $\sigma = 0.001$
and $\delta=0.001$, the phase transitions for all variants of the
Robust-$\ell_0$ algorithms in fact increase from $\rho\approx 0.08$ to
$\rho \approx 0.1$ as $n$ increases from $2^{22}$ to $2^{24}$.
Additionally, contrasting Figures \ref{fig:tfd_a} and \ref{fig:tfd_b}
or \ref{fig:tfd_c} and \ref{fig:tfd_d} shows that a ten-fold increase
in $\sigma$ has the expected effect of reducing the phase transition and increasing the
computational time.
Figures \ref{fig:tfd_a} and \ref{fig:tfd_b} show the phase
transition for Robust-$\ell_0$ and Robust-$\ell_0$-adaptive remain
significant even for $\delta$ as small as $10^{-3}$.
Figures \ref{fig:tfd_c} and \ref{fig:tfd_d} show results for the same
set of experiments, but for $\delta = 0.01$ which corresponds to a
ten-fold increase in $\delta$ over the value used in Figures
\ref{fig:tfd_a} and \ref{fig:tfd_b}. 
For $\sigma = 0.001$ the phase transition of Robust-$\ell_0$ reaches
$\rho \approx 0.17$, while for $\sigma = 0.01$ the phase transitions
drop to $\rho \approx 0.12$ and there is an increase in the
computational time.
\begin{figure}[!htbp]
	\centering
	\subfloat[$\delta = 0.001$, $\sigma = 0.001$]{\includegraphics[width=0.49\linewidth]{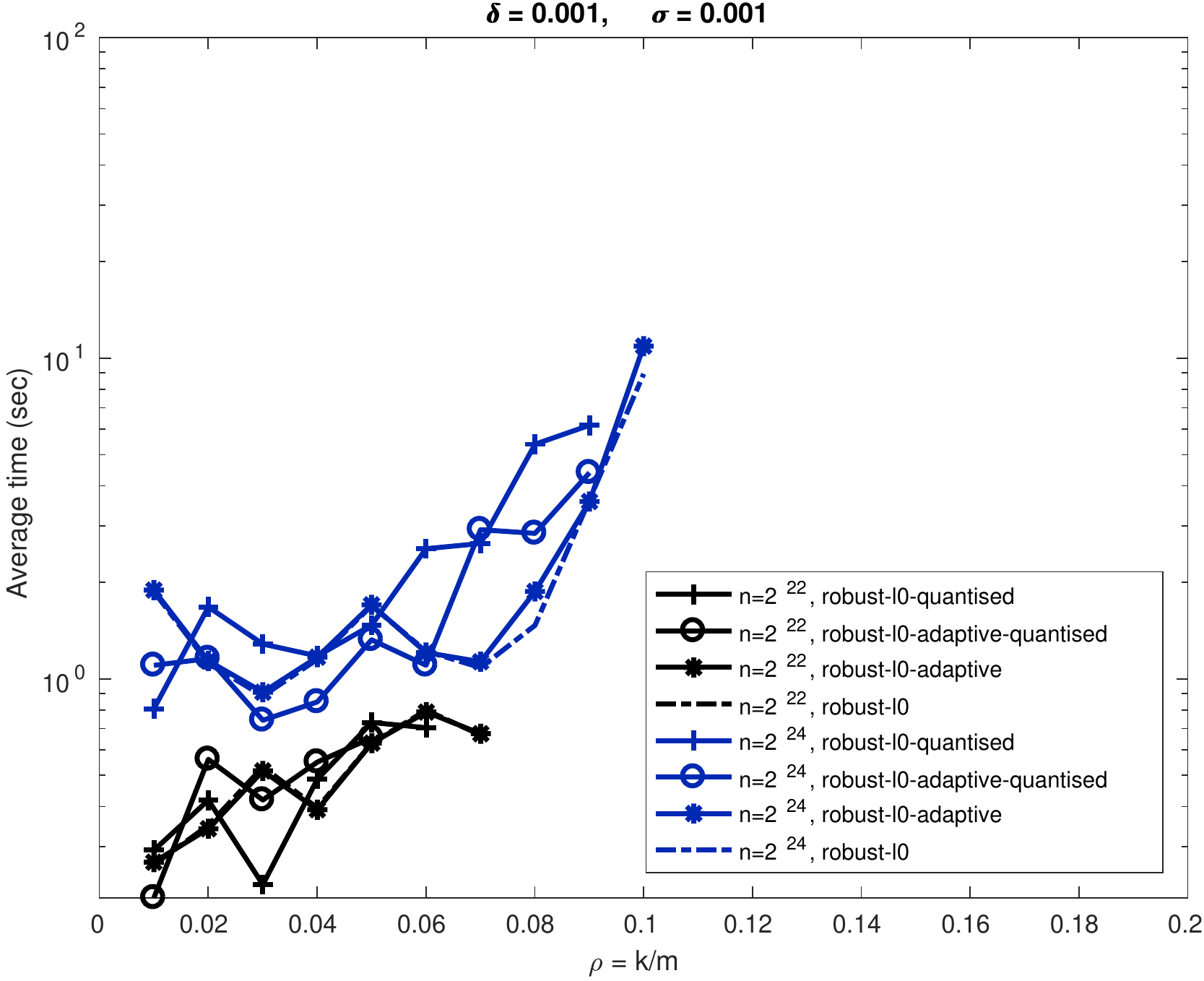} \label{fig:tfd_a}}
	\subfloat[$\delta = 0.001$, $\sigma = 0.01$]{\includegraphics[width=0.49\linewidth]{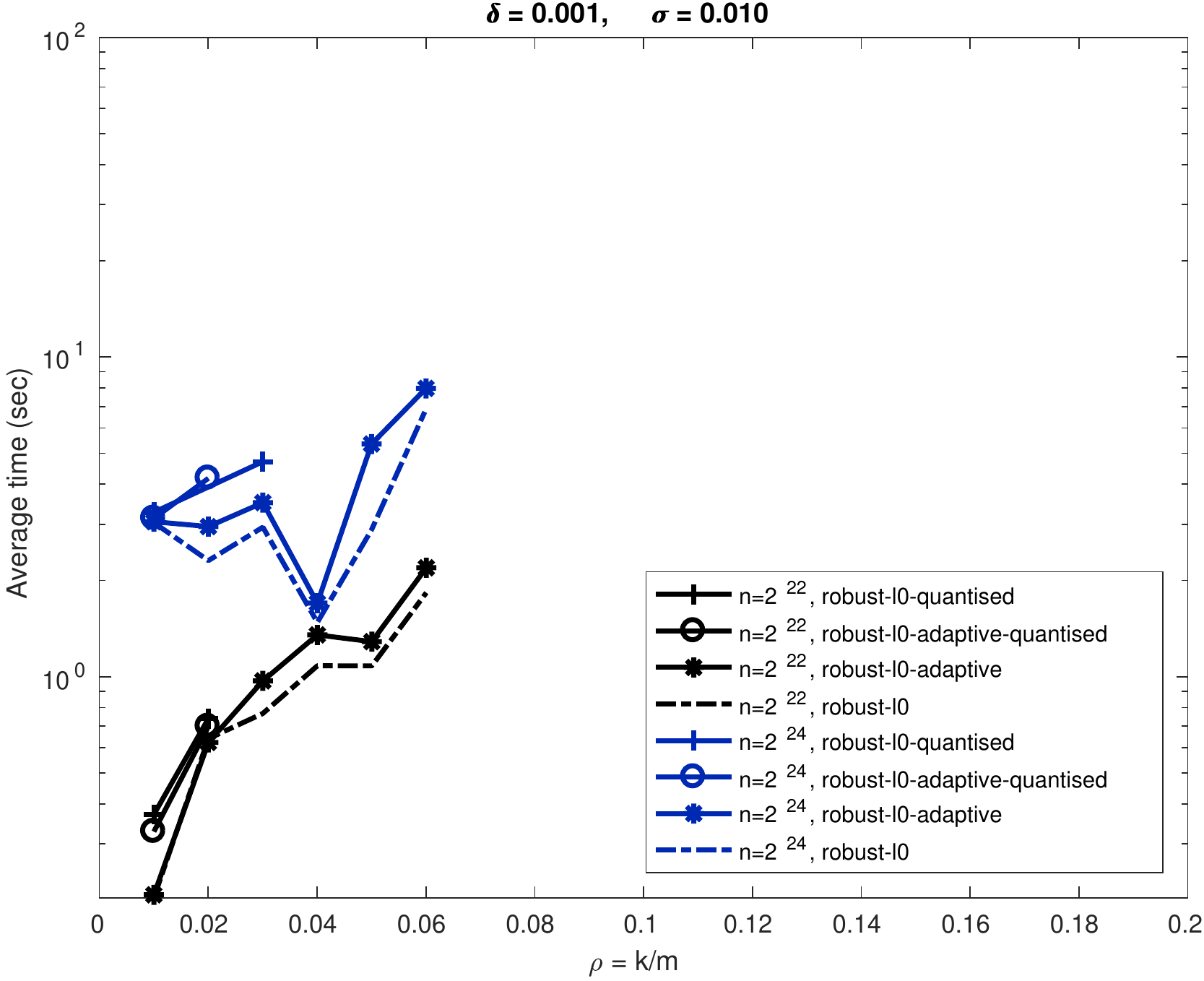} \label{fig:tfd_b}}

	\subfloat[$\delta = 0.01$, $\sigma = 0.001$]{\includegraphics[width=0.49\linewidth]{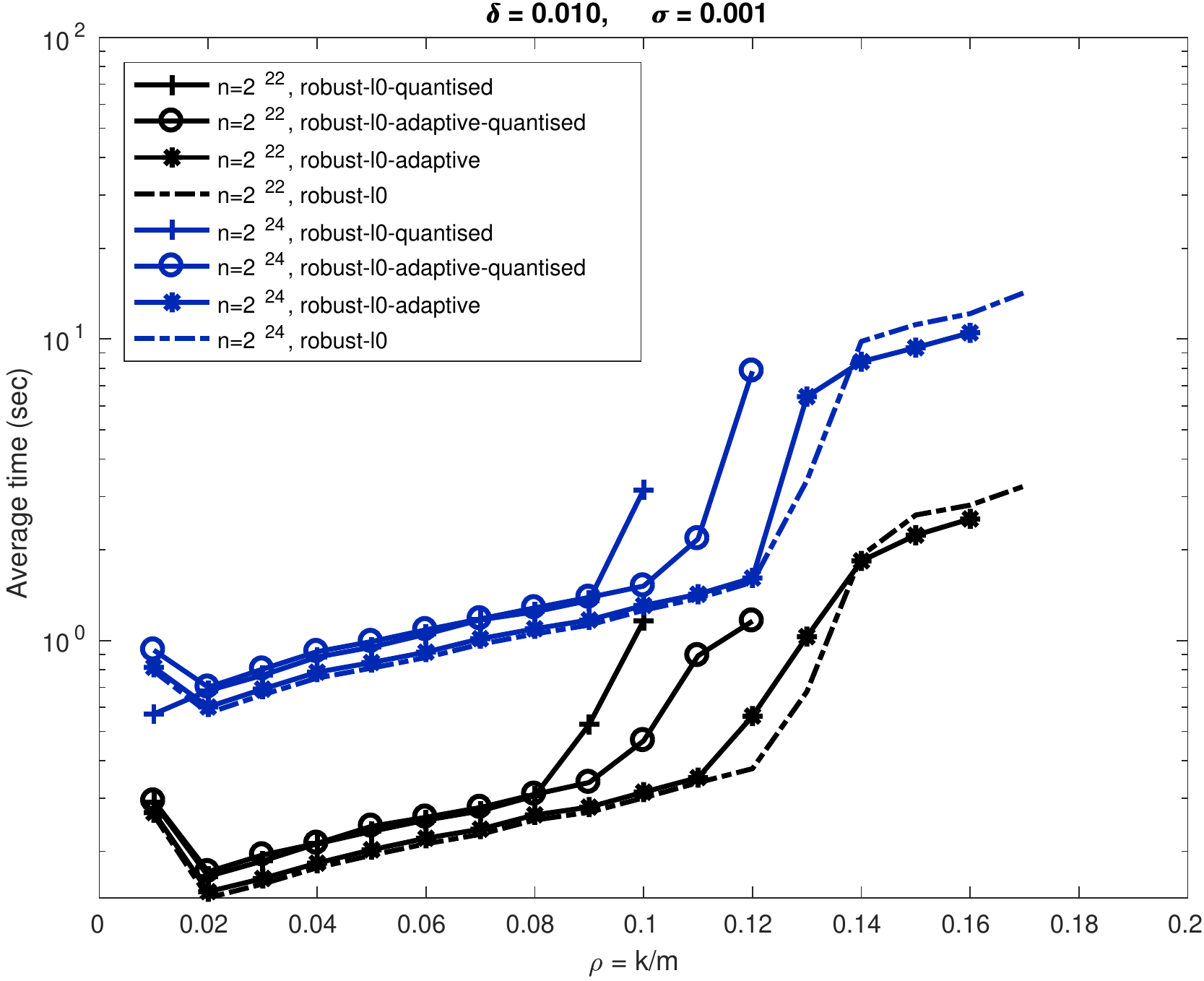} \label{fig:tfd_c}}
	\subfloat[$\delta=0.01$, $\sigma = 0.01$]{\includegraphics[width=0.49\linewidth]{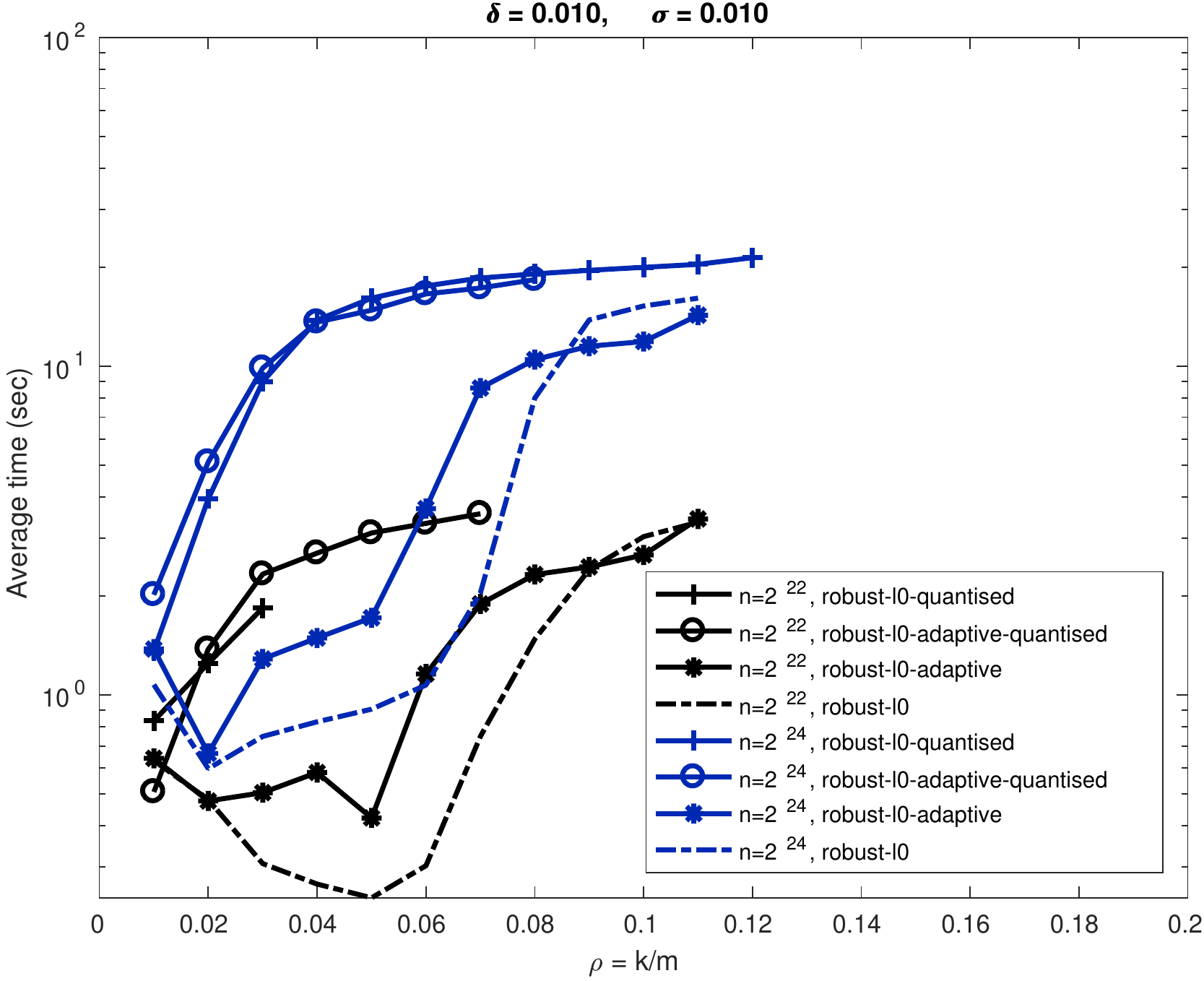} \label{fig:tfd_d}}
	\caption[Timing for low $\delta$]{Phase transitions and timing
          Robust-$\ell_0$ for $\delta\ll 1$.}
	\label{fig:timing_low_delta}
\end{figure}

\section{Conclusions}

We have shown that the decoding framework presented in \cite{Tanner:2015aa} can be extended to the case where the measurements are corrupted by additive noise.
This framework is extended by deriving the posterior distribution of an entry in the residual being zero or being equal to another residual entry given the corrupted measurements. 
This Bayesian approach to decoding was implemented in Robust-$\ell_0$ and its four variants.
We show that the resulting algorithms inherits some desirable properties from Parallel-$\ell_0$ like high phase transitions for low $\delta$ and large $n$ and low-latency.
However, these qualities are weakened by the corruption of the measurements. 
Our numerical experiments show that Robust-$\ell_0$ should be considered in cases of moderate noise and $\rho \lessapprox 0.3$.
\section*{Acknowledgments}

The authors would like to thank William Carson for many helpful discussions on imaging applications of this work.
This work was supported by The Alan Turing Institute under the EPSRC grant EP/N510129/1.


\begin{IEEEkeywords}
compressed sensing, expander graphs, dissociated signals, robust algorithms.
\end{IEEEkeywords}

\bibliographystyle{IEEEtran}
\bibliography{robust_l0.bib}

\end{document}